\newcommand{\R}{\mathbb{R}}
\newcommand{\eps}{\varepsilon}
\newcommand{\Sfrac}[2]{{ \textstyle \frac{#1}{#2}}}
\newcommand{\intall}{\int_{\R}}
\newcommand{\inthalf}{\int_{\R_+}}
\newtheorem{definition}{Definition}
\newtheorem{thm}{Theorem}
\newtheorem{lemma}{Lemma}
\newtheorem{prop}{Proposition}
\newtheorem{corollary}{Corollary}
\newtheorem{remark}[thm]{Remark}
\newcounter{zd}
\newcounter{zdr}[subsection]
\def\pa{\partial}
\def\ve{\varepsilon}
\def\cal{\mathcal}
\numberwithin{equation}{section}
\numberwithin{thm}{section}
\numberwithin{cor}{section}
\numberwithin{lemma}{section}
\numberwithin{prop}{section}
\numberwithin{definition}{section}
\begin{document}

$\;$

\vspace{0.3in}

\begin{center}
{\Large \bf Singular solutions of a fully nonlinear 2x2 system of conservation laws}
\end{center}

\medskip

\vspace{1in}

\centerline{ \large{Henrik Kalisch} \footnote{
            \texttt{Department of Mathematics, University of Bergen; e-mail: henrik.kalisch@math.uib.no}
            }
            \\
\large{Darko Mitrovic} \footnote{
            \texttt{Department of Mathematics, University of Montenegro; e-mail: matematika@t-com.me}
            }$^{,}$\footnote{\texttt{The corresponding author}.}
}

\vspace{0.5in}

{\bf Abstract:} Existence and admissibility of $\delta$-shock
solutions is discussed for the non-convex strictly
hyperbolic system of equations% modeling of elastic or hydromagnetic coplanar case:
\begin{align*}
\pa_t u + \pa_x \big( \Sfrac{u^2+v^2}{2} \big) &=0\\
\pa_t v +\pa_x(v(u-1))&=0.
\end{align*}
The system is fully nonlinear, i.e. it is nonlinear with respect to
both unknowns, and it does not admit the classical Lax-admissible
solution for certain Riemann problems. By introducing complex-valued
corrections in the framework of the weak asymptotic method, we show
that a compressive $\delta$-shock solution resolves such Riemann
problems. By letting the approximation parameter tend to zero, the
corrections become real-valued and the solutions can be seen to fit
into the framework of weak singular solutions defined in \cite{DSH}.
Indeed, in this context, we can show that every $2\times 2$ system
of conservation laws admits $\delta$-shock solutions.

 \vspace{0.3in}

\emph{Keywords:} \ Conservation laws, Riemann problem, singular
solutions, weak asymptotics, MHD.

\emph{AMS classification:} \ 35L65;  35L67; 76W05.

\newpage

\section{Introduction}
The main subject of this paper is a system of conservation laws
appearing in the study of plasmas. The system is known as the {\it
Brio} system and has the form
\begin{equation}
\label{brio1}
\begin{split}
\pa_t u + \pa_x\big(\Sfrac{u^2+v^2}{2} \big)&=0,\\
\pa_t v +\pa_x(v(u-1))&=0.
\end{split}
\end{equation} The system is strictly
hyperbolic; it is genuinely nonlinear at $\{(u,v):\;u\in \R, \;v>0
\}$ and $\{(u,v):\;u\in \R, \;v<0 \}$, but not on the whole of $\R^2$.
The system was introduced in \cite{brio} and thoroughly considered in
\cite{Lfl2}. There, it was found that for certain initial data no
solution consisting of the Lax-admissible elementary waves (shock
and rarefaction waves) exists. In \cite{Lfl2}, Riemann problems for
\eqref{brio1} were compared to Riemann problems for the system
\begin{equation}
\label{brio1'}
\begin{split}
\pa_t u + \pa_x \big( \Sfrac{u^2}{2} \big) &=0\\
\pa_t v +\pa_x(v(u-1))&=0.
\end{split}
\end{equation}
Numerical computations of appropriate viscous profiles for \eqref{brio1}
and \eqref{brio1'} demonstrated surprising similarities.
In the same paper, it was shown that certain Riemann problems
for \eqref{brio1'} admit $\delta$-shock wave solutions.
However, the same fact could not be
established for any Riemann problem corresponding to \eqref{brio1}.
In the present work, we aim to resolve the question of existence of
$\delta$-shock wave solutions of \eqref{brio1}, and the question of
physical justifiability of such solutions to the Riemann problem
associated to \eqref{brio1}. We remark that for
\eqref{brio1'}, if the $\delta$ distribution is a part if the
solution then it is adjoined to the function $v$ (with respect to
which the system is linear). However, in the case of system
\eqref{brio1}, our investigation shows that it is more natural for the
$\delta$ distribution to be a part of the function $u$.

%effect that the numerical explorations in \cite{Lfl2} provided the
%right intuition, and that \eqref{brio1} admits $\delta$-shock wave
%solutions both in the weak asymptotic sense, and in the weak sense
%defined in \cite{DSH}.

The study of singular solutions of systems of conservation laws was
initiated by Korchinski \cite{Korchinski} and Keyfitz and Kranzer
\cite{KK1,KK2}. In the last few years, interest in the topic has
grown, and a sample of results may be found in \cite{CL, DM1,
ercole, Lfl2, huang1, huang2, joseph, veque, LX, DMMN, MNarma, SZ,
TZZ, Y}. One convenient tool for constructing singular solutions is
the method weak asymptotics. This method has been used recently to
understand the evolution of nonlinear waves in scalar conservation
laws as well as interaction and formation of $\delta$-shock waves in
the case of a triangular system of conservation laws
\cite{DM1,DM2,DSH}.
We refer the reader to \cite{Omel} and the references contained therein
for further applications of the weak asymptotics method.

In the present work, we introduce an extension of the weak
asymptotics method to the case where complex-valued corrections are
considered for the approximate solutions. Even though the imaginary
parts of the solutions so constructed vanish in an appropriate
limit, it appears that considering complex-valued weak asymptotic
solutions significantly extends the range of possible singular
solutions.

It appears that the weak asymptotic method has so far only been used to
construct singular solutions of systems for which the flux functions
were {\em linear} with respect to the unknown function which
contains $\delta$-distribution. In contrast, note that the flux
$(f(u,v),g(u,v))=\left((u^2+v^2)/2,v(u-1)\right)$ associated with
the system \eqref{brio1} is nonlinear in both $u$ and $v$, and none
of the existing methods yield singular solutions of this system.
Thus it appears that the use of complex-valued corrections is
essential in the construction of singular solutions for
\eqref{brio1}.

%We propose extension of the method of weak asymptotics which
%allows the construction of singular solutions of Riemann problems
%for systems of hyperbolic conservation laws. The method is based on
%using complex-valued approximations which become real-valued in the
%distributional limit. It is shown how this approach can be used to
%construct solutions containing combinations of classical hyperbolic
%shock waves and Dirac delta distributions.

%appears that the use of complex-valued approximations is essential
%for the construction of solutions of $\delta$-shock type for systems
%nonlinear in both dependent variables.

Let us next define what we mean by complex-valued weak asymptotic
solution, and highlight some methods to restrict the notion of
solution with the goal of obtaining uniqueness. First we define a
vanishing family of distributions.
\begin{definition}
\label{defwas}
Let $f_{\ve}(x) \in {\cal D}'(\R)$ be a family of
distributions depending on $\eps\in (0,1)$,
We say that $f_{\ve} = o_{{\cal D}'}(1)$
%\subset {\cal D}'(\R)$ %and $t\in {\bf R}_+$
if for any test function $\phi(x)\in \mathcal{D}(\R)$, the estimate
$$
\langle f_{\ve},\phi \rangle=o(1),  \ \ {\rm as}\ \ \eps\to 0
$$
holds.
\end{definition}
The estimate on the right-hand side is understood in the
usual Landau sense. Thus we may say that a family of distributions
approach zero in the sense defined above if for a given test
function $\phi$, the pairing $\langle f_{\ve},\phi \rangle$
converges to zero as $\ve$ approaches zero.
%\begin{definition}
For families of distributions $f_{\ve}(x,t)$, we write $f_{\ve} =
o_{{\cal D}'}(1)\subset {\cal D}'(\R)$ if the estimate above holds
uniformly in $t$. More succinctly, we require that
\begin{equation*}
\langle f_{\ve}(\cdot,t),\varphi \rangle \leq C_T g(\eps) \mbox{  for }
t \in [0,T],
\end{equation*}
where the function $g$ depends on the test function $\varphi(x,t)$
and tends to zero as $\eps\to 0$,
and where $C_T$ is a constant depending only on $T$.
%\end{definition}
We define weak asymptotic solutions to a general system
of two conservation laws
\begin{equation}
\label{gensystem}
\begin{split}
\pa_t u + \pa_x f(u,v)  =&  0,       \\
\pa_t v + \pa_x g(u,v) =& 0,
\end{split}
\end{equation}
as follows.
% ***********************************************************************
\begin{definition}
\label{defbg1} We say that the families of smooth complex-valued
distributions $(u_\eps)$ and $(v_\eps)$ represent a
weak asymptotic solution to \eqref{gensystem} if there exist
real-valued distributions $u,v\in C(\R_+;{\cal D}'(\R))$, such that
for every fixed $t\in \R_+$
$$
u_\eps\rightharpoonup u, \ \ v_\eps\rightharpoonup v \ \ {\rm
as} \ \ \eps\to 0,
$$ in the sense of distributions in  ${\cal D}'(\R)$, and
\begin{equation}
\label{gendef} \left.
\begin{array}{r @{\quad = \quad}l }
\pa_t u_\eps + \pa_x f(u_\eps,v_\eps)  &  {o}_{{\cal D}'}(1),       \\
\pa_tv_\eps + \pa_x g(u_\eps,v_\eps) & {o}_{{\cal D}'}(1).
\end{array}
\right\}
\end{equation}
\end{definition}
It is evident that this definition requires some additional assumptions
of the fluxes $f$ and $g$. In particular, $f$ and $g$ must have an extension
into the complex plane. One may for instance restrict to fluxes that
are real-analytic, though in principle a wider class of fluxes is possible.
The main issue in the requirement on the fluxes, and indeed with this method
of constructing solutions is the question of uniqueness.
For example, by adding a constant term of order ${\cal O}(\eps)$
to any weak asymptotic solution,
one immediately obtains two different weak asymptotic
solutions which correspond to the same solution if a more
restrictive concept is used.

One way to narrow the class of solution candidates is to require
distributional solutions to satisfy the equations in a stronger
sense than the one defined in Definition \ref{defbg1}. This approach
entails substituting them into \eqref{brio1}, and to check directly
whether the equations are satisfied. This strategy involves the
problem of multiplication of singular distributions. The problem of
taking products of singular distributions was overcome by Danilov
and Shelkovich in \cite{DSH} in a rather elegant way. In their work,
the weak asymptotic solution is constructed such that the terms that
do not have a distributional limit cancel in the limit as $\eps$
approaches zero. As a result, it is not necessary to include
singular terms in the definition of the weak solution. Thus, the
problem of multiplication of distributions is automatically
eliminated, and the class of possible solution is significantly
reduced.

There are also several other reasonable ways to multiply
Heaviside and Dirac distributions.
In \cite{col1, DMLM, huang, Vol}, a number of definitions of weak solutions
of \eqref{gensystem} are introduced.
Among the latter approaches, we emphasize the measure-type solution
concept introduced in \cite{DMLM, huang}. Moreover, the framework
from \cite{huang} yields uniqueness of solutions if an additional
condition of Oleinik-type is required, and that is probably the only
work so far which obtains a uniqueness result for arbitrary initial
data in a class of distributional solutions weak enough to allow
delta distributions. However, uniqueness has also been obtained for
special classes of initial data by LeFloch in \cite{Lfl3}, and by
Nedeljkov \cite{MNarma}.

We remark that a systematic study of multiplication of distributions
problem is investigated in the Colombeau algebra framework
\cite{col1, ober1, NPS}. In these works, problems of the type
considered here are also investigated.
Actually, Definition \ref{defbg1} can be understood as a
variant of appropriate definitions in \cite{CO, MNmmas, MNqam}.
The main difference is that in the present case, a solution is found pointwise
with respect to $t\in \R_+$,
and it is required that the distributional limit of the weak asymptotic solution
be a distribution. The latter is not necessary in the framework of the
Colombeau algebra though it may be tacitly assumed.

The plan of the present paper is as follows. We will provide a
review of the definition of weak singular solutions from \cite{DSH}
in Section 2. It turns out that a somewhat more general statement is
appropriate here. Moreover, it will be proved that any $2\times 2$
system of hyperbolic conservation admits singular solutions of this
type. In Section 3, weak asymptotic solutions of the Brio system are
found. The results of that section are very important since they
represent a justification of the concept introduced in Section 2
which will be applied in the Section 4. In that section, it is shown
that the limit of the weak asymptotic solutions satisfy the equation
in the sense of Definition \ref{def-gen}. Also, an adaptation of the
Lax admissibility concept is proposed which provide physically
sustainable solutions to corresponding Riemann problems. The final
section is the Appendix where we consider other possibilities for
existence of $\delta$-shock solutions.

\section{Generalized weak solutions}
In this section, the definition of weak singular solutions of a
$2\times 2$ system of conservation laws provided in \cite{DSH} is
reviewed.
Indeed, we shall show that any $2\times 2$ systems of the form
\begin{align*}
\pa_t u + \pa_x f(u,v)  =&  0,       \\
\pa_t v + \pa_x g(u,v) =& 0,
\end{align*}
admits $\delta$-type solution in the framework introduced in \cite{DSH}.
While the definition in \cite{DSH} is given only for
solutions singular in the second variable, while assuming that the
flux functions $f$ and $g$ are linear in the second variable, it
appears that the definition can actually be made more general.
Suppose $\Gamma =\{\gamma_i~|~i \in I \}$ is a graph in the closed
upper half plane, containing Lipschitz continuous arcs $\gamma_i$,
$i\in I$, where $I$ is a finite index set. Let $I_0$ be the subset
of $I$ containing all indices of arcs that connect to the $x$-axis,
and let $\Gamma_0 = \{x^0_k~|~k \in I_0 \}$ be the set of initial
points of the arcs $\gamma_k$ with $k \in I_0$. Define the singular
part by $\alpha(x,t) \delta(\Gamma) = \sum_{i \in I}\alpha_i(x,t)
\delta(\gamma_i)$. Let $(u,v)$ be a pair of distributions, where $v$
is represented in the form
$$
v(x,t)=V(x,t)+\alpha(x,t)\delta(\Gamma),
$$
and where $u,V\in L^\infty(\R \times \R_+)$.
Finally, the expression $\frac{\pa \varphi(x,t)}{\pa {\rm l}}$
denotes the tangential derivative of a function $\varphi$ on the graph
$\gamma_i$, and $\int_{\gamma_i}$ connotes the line integral over
the arc $\gamma_i$.
\begin{definition}%\label{def-gen}
{\bf a)} %(\cite{DSH})
The pair of distributions
$u$ and $v=V+\alpha(x,t)\delta(\Gamma)$
are called a generalized $\delta$-shock wave solution of system
\eqref{gensystem} with the initial data
$U_0(x)$ and $V_0(x)+ \sum_{I_0}\alpha_k(x_0^k,0)\delta\big( x-x_k^0 \big)$
if the integral identities
\begin{align}
\label{m1-g1} &\inthalf \! \! \intall \left( u\pa_t\varphi +f(u,V)
\pa_x\varphi\right)~dxdt + \intall U_0(x)\varphi(x,0)~dx = 0,\\
\label{m2-g1}
&\inthalf \! \! \intall \left(V\pa_t\varphi+ g(u,V)\pa_x\varphi\right)~dxdt\\
&\qquad + \sum\limits_{i\in I}\int_{\gamma_i}\alpha_i(x,t)
\Sfrac{\pa \varphi(x,t)}{\pa {\bf l}} \, + \int_{\R} V_0(x)
\varphi(x,0)~dx + \sum \limits_{k\in I_0} \alpha_k(x_k^0,0)
\varphi(x_k^0,0) = 0, \nonumber
\end{align}
hold for all test functions $\varphi\in {\cal D}(\R\times \R_+)$.
\end{definition}
%\vspace{0.5cm}
\setcounter{definition}{0}
%This definition was given by Danilov and Shelkovich \cite{DSH}.
The next definition concerns the similar situation where the
singular solution is contained in $u$, and $v$ is a regular distribution.
Thus we assume the representation
$$
u(x,t)=U(x,t)+\alpha(x,t)\delta(\Gamma),
$$
where now
$U,v\in L^\infty(\R \times \R_+)$, and $\alpha(x,t)\delta(\Gamma)$
is defined as before.
\begin{definition}\label{def-gen}
{\bf b)}
The pair of distributions
$u=U+\alpha(x,t)\delta(\Gamma)$ and $v$
is a generalized $\delta$-shock wave solution of system \eqref{gensystem}
with the initial data
$U_0(x) + \sum_{I_0}\alpha_k(x_0^k,0) \delta\big( x-x_k^0 \big)$ and $V_0(x))$
if the integral identities
\begin{align}
\label{m1-g2} &\inthalf \! \! \intall \left( U\pa_t\varphi +f(U,v)
\pa_x\varphi\right)~dxdt\\
&\qquad\qquad\qquad+\sum\limits_{i\in
I}\int_{\gamma_i}\alpha_i(x,t)\Sfrac{\pa \varphi(x,t)}{\pa {\bf
l}}+\intall U_0(x)\varphi(x,0)dx + \sum \limits_{k\in I_0}
\alpha_k(x_k^0,0) \varphi(x_k^0,0)
=0,\nonumber\\
\label{m2-g2} &\inthalf \! \! \intall \left(v\pa_t\varphi
+ g(U,v)\pa_x\varphi\right)~dxdt
+ \intall V_0(x)\varphi(x,0)~dx = 0,
\end{align}
hold for all test functions $\varphi\in {\cal D}(\R\times \R_+)$.
\end{definition}
This definition may be interpreted as an extension of the
classical weak solution concept. Moreover, as noticed in e.g.
\cite{AD}, the definition is consistent with the
concept of measure solutions \cite{DMLM, huang}.

Definition \ref{def-gen} is quite general, allowing a combination of initial
steps and delta distributions; but its effectiveness is already
demonstrated by considering the Riemann problem with a single jump.
Indeed, for this configuration it can be shown that a $\delta$-shock
wave solution exists for any $2\times 2$ system of conservation
laws.
%Indeed, consider the Riemann problems for the initial data
%$u_0(x) = U_0(x)$, $v_0(x) = V_0(x) + \alpha_0 \delta$,
%and
%$u_0(x) = U_0(x) + \alpha_0 \delta$, $v_0(x) = V_0(x)$,
%with the initial step given by
%\begin{equation}
%\label{sol-1}
%U_0(x)=\begin{cases} u_1, &x<0,\\
%u_2, & x>0,
%\end{cases} \quad \mbox{ and } \quad V_0(x)=\begin{cases} v_1, &x<0,\\
%v_2, & x>.
%\end{cases}
%\end{equation}
Consider the Riemann problem for \eqref{gensystem} with initial data
$u(x,0) = U_0(x)$ and  $v(x,0) = V_0(x)$,
where
\begin{equation}
\label{rieman-data}
U_0(x)=\begin{cases} u_1, &x<0\\
u_2, & x>0
\end{cases}, \quad
%\quad \mbox{ and } \quad
V_0(x)=\begin{cases} v_1, &x<0\\
v_2, & x>0
\end{cases}.
\end{equation}
Then, the following theorem holds.
\begin{thm}\label{thm-cnl}{\bf a)}
If $u_1\neq u_2$ then the pair of distributions
\begin{eqnarray}\label{sol-a1}
u(x,t) & = & U_0(x-ct), \\
\label{sol-a2}
v(x,t) & = & V_0(x-ct) + \alpha(t) \delta(x-ct),
\end{eqnarray}
where
$$
c=\frac{[f(u,V)]}{[u]}=\frac{f(u_2,v_2)-f(u_1,v_1)}{u_2-u_1}, \ \mbox{ and } \
\alpha(t)=(c[V]-[g(u,V)])t,%=(c(u_2-u_1)-(g(u_2,v_2)-g(u_1,v_1)))t,
$$
represents the $\delta$-shock wave solution of \eqref{gensystem}
with initial data $U_0(x)$ and $V_0(x)$
in the sense of Definition \ref{def-gen} a).
\end{thm}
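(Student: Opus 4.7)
The plan is to verify the two integral identities \eqref{m1-g1} and \eqref{m2-g1} directly, since the proposed solution is of a very specific traveling form and the computations reduce to a Rankine-Hugoniot-type calculation together with a boundary computation along the discontinuity line $\Gamma = \{x=ct\}$.

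First I would handle \eqref{m1-g1}. Both $u(x,t)=U_0(x-ct)$ and $f(u,V)(x,t) = f(U_0,V_0)(x-ct)$ are pure step functions traveling at speed $c$, so integration by parts in the weak formulation produces only a line integral along $\Gamma$ with integrand proportional to $-c[u] + [f(u,V)]$. By the choice $c = [f(u,V)]/[u]$ this coefficient vanishes, and the initial term matches $U_0$; hence the first identity holds. The fact that $u_1 \neq u_2$ enters only to guarantee that $c$ is well defined.

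Next I would verify \eqref{m2-g1}. The regular part $V$ contributes, as above, a line integral along $\Gamma$ with coefficient $-c[V] + [g(u,V)]$ (note that in Definition 2.1\,a) the flux in the $v$-equation is evaluated at $(u,V)$, not at $(u,v)$, which is what makes the notion of solution well-defined despite the presence of $\alpha(t)\delta(\Gamma)$). For the singular part, parametrizing $\Gamma$ by $t \mapsto (ct,t)$, the tangential derivative satisfies $\frac{\partial \varphi}{\partial \mathbf{l}} \, dl = (\varphi_t + c\varphi_x)\, dt = \frac{d}{dt}\varphi(ct,t)\, dt$, so
\begin{equation*}
\int_{\Gamma} \alpha(t)\, \frac{\partial \varphi}{\partial \mathbf{l}} \, dl
= \int_0^\infty \alpha(t)\,\frac{d}{dt}\varphi(ct,t)\, dt
= -\int_0^\infty \alpha'(t)\,\varphi(ct,t)\, dt,
\end{equation*}
after integrating by parts and using $\alpha(0)=0$ together with the compact support of $\varphi$. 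Collecting the contributions, the total coefficient of $\varphi(ct,t)$ along $\Gamma$ is $\bigl(c[V]-[g(u,V)]\bigr) - \alpha'(t)$, which vanishes precisely when $\alpha'(t) = c[V] - [g(u,V)]$; integrating with $\alpha(0)=0$ yields the stated formula for $\alpha(t)$.

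There is no genuine obstacle: the proof is entirely a bookkeeping exercise once the distributional derivatives of step functions and of $\alpha(t)\delta(x-ct)$ are computed. The only place that requires some care is the interpretation of the tangential line integral in the definition, and in particular the reduction $\frac{\partial \varphi}{\partial \mathbf{l}}\, dl = d\varphi(ct,t)$ which allows the singular contribution to be transferred onto $\alpha'(t)$; verifying that the resulting ODE for $\alpha$ matches the ansatz is then immediate.
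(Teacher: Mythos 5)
Your proposal is correct and follows essentially the same route as the paper's own proof: the first identity reduces to the Rankine--Hugoniot condition fixing $c$, and substituting into the second identity yields $\int_{\R_+}\bigl(c[V]-[g(u,V)]-\alpha'(t)\bigr)\varphi(ct,t)\,dt=0$, which together with $\alpha(0)=0$ gives the stated $\alpha$. You merely spell out the ``standard transformations'' (the reduction of the tangential line integral to $\int\alpha(t)\,d\varphi(ct,t)$ and the subsequent integration by parts) that the paper leaves implicit.
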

\setcounter{thm}{0}
\begin{thm}%\label{thm-cnl}
{\bf b)} If $v_1\neq v_2$ then the pair of distributions
\begin{eqnarray}\label{sol-b1}
u(x,t) & = & U_0(x-ct) + \alpha(t) \delta(x-ct), \\
\label{sol-b2}
v(x,t) & = & V_0(x-ct),
\end{eqnarray}
where
$$
c=\frac{[g(U,v)]}{[v]}=\frac{g(u_2,v_2)-g(u_1,v_1)}{v_2-v_1}, \ \
\alpha(t)=(c[U]-[f(U,v)])t%=(c(u_2-u_1)-(f(u_2,v_2)-f(u_1,v_1)))t,
$$
represents the $\delta$-shock solution of \eqref{gensystem} with
initial data $U_0(x)$ and $V_0(x)$ in the sense of Definition
\ref{def-gen} b).
\end{thm}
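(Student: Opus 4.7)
The plan is to verify the two integral identities in Definition \ref{def-gen} b) directly by splitting the space--time domain at the shock line $x = ct$ and integrating by parts. The computation is entirely parallel to part (a) under the symmetry $(u, f) \leftrightarrow (v, g)$, so nothing new is needed beyond carefully handling the tangential derivative along the shock curve.

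First I would check \eqref{m2-g2}. Both $U = U_0(x - ct)$ and $v = V_0(x - ct)$ are rigidly translating step functions, so $g(U, v)$ is also a traveling step with jump $[g(U, v)]$. A standard integration by parts on the two half-planes $\{x < ct\}$ and $\{x > ct\}$ collapses the left-hand side to the single boundary term $\int_0^\infty (c[v] - [g(U, v)])\,\varphi(ct, t)\,dt$. By the defining relation $c = [g(U,v)]/[v]$, this vanishes.

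For \eqref{m1-g2} the same splitting applied to the smooth part of $u$ produces the defect
$$\inthalf \! \! \intall (U\pa_t\varphi + f(U,v)\pa_x\varphi)\,dxdt + \intall U_0(x)\varphi(x,0)\,dx = \int_0^\infty (c[U] - [f(U,v)])\,\varphi(ct,t)\,dt,$$
which is in general nonzero since here $c$ is fixed by the $v$-equation, not the $u$-equation. The singular term $\alpha(t)\delta(x - ct)$ is precisely what absorbs this defect. The graph $\Gamma$ consists of the single Lipschitz arc $\gamma = \{(ct, t) : t \ge 0\}$; in the parametrization by $t$ one has $\frac{\pa \varphi}{\pa {\bf l}}\,d\ell = \frac{d}{dt}\varphi(ct, t)\,dt$, because the unit-tangent normalization and the arc-length element $\sqrt{1+c^2}\,dt$ cancel. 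Integration by parts, using $\alpha(0) = 0$ and $\alpha'(t) = c[U] - [f(U,v)]$, then yields
$$\int_{\gamma} \alpha\, \frac{\pa \varphi}{\pa {\bf l}} = \int_0^\infty \alpha(t)\, \frac{d}{dt}\varphi(ct, t)\,dt = -\int_0^\infty (c[U] - [f(U,v)])\,\varphi(ct, t)\,dt,$$
which exactly cancels the defect, and \eqref{m1-g2} follows.

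The only mildly delicate point, and really the only nonstandard piece of the argument, is the geometric identification of $\int_\gamma \alpha\, \pa\varphi/\pa{\bf l}\,d\ell$ with the one-dimensional integral $\int_0^\infty \alpha(t)\, \frac{d}{dt}\varphi(ct,t)\,dt$; once that is in hand the remaining Rankine--Hugoniot bookkeeping is routine, and the initial condition $\alpha(0) = 0$ is exactly what is needed to tie the two boundary contributions together.
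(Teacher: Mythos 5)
Your proof is correct and follows essentially the same route as the paper: the paper proves part a) by substituting the ansatz, reducing via ``standard transformations'' to the boundary integral $\int_{\R_+}(-c[V]+[g])\varphi(ct,t)\,dt-\int_{\R_+}\alpha'(t)\varphi(ct,t)\,dt$ along the shock, and invoking $\alpha(0)=0$, then declares part b) analogous --- which is precisely the computation you carry out, with the roles of the two equations interchanged. Your explicit verification that the arc-length element $\sqrt{1+c^2}\,dt$ cancels the unit-tangent normalization is a correct filling-in of a detail the paper leaves implicit.
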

\begin{proof}
We will prove only the first part of the theorem as the
second part can be proved analogously. We immediately see that
$u$ and $v$ given by \eqref{sol-a1} and \eqref{sol-a2}
satisfy \eqref{m1-g1} since $c$ is given exactly by the
Rankine-Hugoniot condition derived from that system. By
substituting $u$ and $v$ into \eqref{m2-g1},
we get after standard transformations:
\begin{align*}
\int_{\R_+}\left(-c[V]+[g(u,V)]\right)\varphi(ct,t)~dt
-\int_{\R_+}\alpha'(t)\varphi(ct,t)~dt = 0.
\end{align*}
From here and since $\alpha(0)=0$, the conclusion follows
immediately.
\end{proof}

As the solution framework of Definition \ref{def-gen} is very weak,
one might expect non-uniqueness issues to arise.
This is indeed the case, and the proof of the following proposition
is an easy exercise.
\begin{prop}\label{non-unique}
System \eqref{gensystem} with the zero initial data:
$u|_{t=0}=v|_{t=0}=0$ admits $\delta$-shock solutions of the form:
\begin{align*}
u(x,t)=0, \ \ v(x,t)=\beta\delta(x-c_1t)-\beta\delta(x-c_2 t),
\end{align*}for arbitrary constants $\beta$, $c_1$ and $c_2$.
\end{prop}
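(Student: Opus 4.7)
The plan is to check directly that the proposed pair fits Definition \ref{def-gen} part a). I would take the graph $\Gamma$ to consist of two rays emanating from the origin, $\gamma_1 = \{x = c_1 t\}$ and $\gamma_2 = \{x = c_2 t\}$, both with initial point $x_1^0 = x_2^0 = 0$, and assign the constant amplitudes $\alpha_1 \equiv \beta$, $\alpha_2 \equiv -\beta$. The regular parts are $u \equiv 0$ and $V \equiv 0$, so that $v = \beta\delta(\gamma_1) - \beta\delta(\gamma_2)$. The initial data is then automatically the zero distribution, since the two opposing masses at the common emission point $x = 0$ combine to give $V_0 + \beta\delta(x) - \beta\delta(x) = 0$.

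With this setup, both weak identities must be verified. In \eqref{m1-g1}, since $u \equiv 0$, $V \equiv 0$ and $U_0 \equiv 0$, the flux reduces to the constant $f(0,0)$, and the entire left-hand side becomes $f(0,0)\iint \partial_x \varphi\, dx\, dt$, which vanishes because $\varphi$ has compact support in $x$. In \eqref{m2-g1} the volume integral is handled in the same way via the constant $g(0,0)$. The only nontrivial terms are the line integrals and the delta-mass boundary terms. Parametrizing $\gamma_i$ by $t$, the expression $\alpha_i \frac{\partial\varphi}{\partial \mathbf{l}}$ integrated along the arc collapses to $\alpha_i\int_0^\infty \frac{d}{dt}\varphi(c_i t, t)\, dt = -\alpha_i \varphi(0,0)$. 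Summed over $i = 1, 2$ these give $-\beta\varphi(0,0) + \beta\varphi(0,0) = 0$, and the initial boundary contribution $\sum_k \alpha_k(x_k^0, 0)\varphi(x_k^0, 0) = \beta\varphi(0,0) - \beta\varphi(0,0)$ cancels too.

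There is no genuine obstacle; the authors already label the statement an easy exercise. The mechanism is simply that the zero background makes both fluxes constants and hence divergence-free, while the two opposing singular masses produce identically cancelling contributions both at the emission time and along their trajectories. The role of the proposition is to highlight that Definition \ref{def-gen} on its own is too weak to force uniqueness, which motivates the admissibility criteria developed later in the paper.
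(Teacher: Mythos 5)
Your verification is correct and is exactly the direct check against Definition~\ref{def-gen}~a) that the paper intends when it declares the proof ``an easy exercise'' (no proof is actually written there): the zero background makes both fluxes constant so the volume integrals vanish, the two opposing masses cancel in the initial data, and for each arc the line integral $-\alpha_i\varphi(0,0)$ cancels against the initial-point term $\alpha_i(x_i^0,0)\varphi(x_i^0,0)$. Nothing is missing.
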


At the moment, we do not have a general concept for resolving such
and similar non-uniqueness issues. In the case of the Brio system
which we shall consider in the sequel, we are also not able to
obtain uniqueness, but we can prove that there always exists a
physically reasonable solution to the corresponding Riemann problem.

Finally, let us remark, that it is of course possible and reasonable
to give a definition along the lines of Definition \ref{def-gen} which allows
for simultaneous concentration effects in both unknowns $u$ and $v$.
In this case, a generalized $\delta$-shock wave solution of \eqref{gensystem}
with the initial data
$U_0(x)+ \sum_{I_0}\alpha_k(x_0^k,0)\delta\big( x-x_k^0 \big)$ and $V_0(x)+ \sum_{I_0}\beta_k(x_0^k,0)\delta\big( x-x_k^0 \big)$
would have the form
$u=U+\alpha(x,t)\delta(\Gamma)$ and $v=V+\beta(x,t)\delta(\Gamma)$,
and satisfy
\begin{align}
\label{m1-g11} &\inthalf \! \! \intall \left( U\pa_t\varphi +f(U,V)
\pa_x\varphi\right)~dxdt\\ &\qquad + \sum\limits_{i\in I}\int_{\gamma_i}\alpha_i(x,t)
\Sfrac{\pa \varphi(x,t)}{\pa {\bf l}} + \intall U_0(x)\varphi(x,0)~dx+ \sum \limits_{k\in I_0} \alpha_k(x_k^0,0)
\varphi(x_k^0,0) = 0,\nonumber\\
\label{m2-g12}
&\inthalf \! \! \intall \left(V\pa_t\varphi+ g(U,V)\pa_x\varphi\right)~dxdt\\
&\qquad + \sum\limits_{i\in I}\int_{\gamma_i}\beta_i(x,t)
\Sfrac{\pa \varphi(x,t)}{\pa {\bf l}} \, + \int_{\R} V_0(x)
\varphi(x,0)~dx + \sum \limits_{k\in I_0} \beta_k(x_k^0,0)
\varphi(x_k^0,0) = 0, \nonumber
\end{align}
for all test functions $\varphi\in {\cal D}(\R\times \R_+)$. An
example of such a situation can be found in \cite{chrom}.

\section{Weak asymptotics for the Brio system}
In this section, we shall construct weak asymptotic solutions for
the Riemann problem associated to the Brio system \eqref{brio1}, and
then show that the weak asymptotic solution converges to the
generalized weak solution to the system in the sense of Definition
\ref{def-gen}. This construction is very important since the fact
that it is possible to find a sequence of smooth approximating
solutions to \eqref{brio1}, \eqref{rieman-data} converging to the
$\delta$-shock solution represents a justification of the concept
laid down in Section 2. In particular, observe that the vanishing viscosity
approximation is a special case of the weak asymptotic approximation
since the term $\eps u_{xx}$ is clearly of order ${\cal O}_{{\cal
D}'}(\eps)$.

To find the weak asymptotic solutions we need to find families of
smooth functions $(u_\eps)$, $(v_\eps)$, such that
\begin{equation}
\label{brioapp}
\begin{split}
\pa_t u_\eps + \pa_x\left(\Sfrac{u^2_\eps + v_\eps^2}{2}\right)  &=  {o}_{{\cal D}'}(1),       \\
\pa_t v_\eps + \pa_x\left(v_\eps(u_\eps - 1)\right) &= {o}_{{\cal
D}'}(1),
\end{split}
\end{equation}
\begin{equation}
\label{lim-brio} u_\eps\rightharpoonup u, \ \ v_\eps\rightharpoonup
v \ \ {\rm as} \ \ \eps\to 0,
\end{equation}
and such that $u(x,0) = U_0(x)$ and  $v(x,0) = V_0(x)$ are given by
\eqref{rieman-data}.
We shall prove the following theorem.
\begin{thm}
{\bf a)}
If $u_1\neq u_2$ then there exist weak
asymptotic solutions $(u_\eps)$, $(v_\eps)$ of the Brio system
\eqref{brio1}, such that the families $(u_\eps)$ and $(v_\eps)$ have
distributional limits
\begin{eqnarray}
 \label{BrioRiemann1-a}
u(x,t) & = & U_0(x-ct), \\
\label{BrioRiemann2-a}
v(x,t) & = & V_0(x-ct) + \alpha(t) \delta(x-ct),
\end{eqnarray}
where
\begin{equation}
\label{ec-a}c=\frac{u^2_1+v_1^2-u^2_2-v^2_2}{2(u_1-u_2)} \ \ {\rm
and} \ \ \alpha(t)= \frac{1}{2} \left(c(v_2-v_1)+(v_1(u_1-1)-v_2(u_2-1))\right)t.
\end{equation}
\end{thm}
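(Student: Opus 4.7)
The strategy is to exhibit an explicit smooth complex-valued family $(u_\eps,v_\eps)$ approximating the proposed singular solution; this consists of smooth real parts that approximate the step data (with, for $v_\eps$, a concentrated bump approximating the delta), perturbed by purely imaginary, concentrated corrections whose role is not to change the distributional limit but to compensate the $\delta^2$-type singular products created by the full nonlinearity. Concretely, with $\omega \in \mathcal{D}(\R)$ an even mollifier, $\omega_\eps(z) = \omega(z/\eps)/\eps$, $\mathcal{H}_\eps(z) = \int_{-\infty}^z \omega_\eps$, and $\Phi_{j,\eps}(z) = \Phi_j(z/\eps)/\eps$ for two profiles $\Phi_j \in \mathcal{D}(\R)$ ($j=1,2$) with zero mean and suitable orthogonality properties with respect to $\omega$ and to each other, the ansatz is
\begin{align*}
u_\eps(x,t) &= u_1 + [u]\,\mathcal{H}_\eps(x-ct) + i\,P(t)\,\Phi_{1,\eps}(x-ct),\\
v_\eps(x,t) &= v_1 + [v]\,\mathcal{H}_\eps(x-ct) + \alpha(t)\,\omega_\eps(x-ct) + i\,Q(t)\,\Phi_{2,\eps}(x-ct),
\end{align*}
with $[u]=u_2-u_1$, $[v]=v_2-v_1$, and real scalar unknowns $c$, $\alpha(t)$, $P(t)$, $Q(t)$. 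Because $\int\Phi_j = 0$, the imaginary corrections $\Phi_{j,\eps}$ vanish in $\mathcal{D}'$ despite their pointwise size of order $1/\eps$, so $(u_\eps,v_\eps)$ automatically has the distributional limits \eqref{BrioRiemann1-a}--\eqref{BrioRiemann2-a} for any choice of $P, Q$.

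Substituting the ansatz into \eqref{brioapp} and expanding the fluxes, the residuals decompose into contributions at different scales: multiples of the concentrated profile $\omega_\eps(x-ct)$; mixed products such as $\mathcal{H}_\eps\omega_\eps$, which by an explicit change of variables satisfies $\mathcal{H}_\eps\omega_\eps \rightharpoonup \tfrac{1}{2}\,\delta(x-ct)$ and is the source of the $\tfrac{1}{2}$-factor appearing in the stated expression for $\alpha(t)$; and concentrated squares $\omega_\eps^2$, $\Phi_{j,\eps}^2$, which are of order $1/\eps$ in $\mathcal{D}'$. Matching the $\omega_\eps$-type coefficients of the first equation produces the Rankine-Hugoniot relation $c[u] = \tfrac{1}{2}([u^2]+[v^2])$, yielding the stated $c$. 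Matching the analogous coefficients of the second equation, with the $\tfrac{1}{2}$-factor inherited from $\mathcal{H}_\eps\omega_\eps$, produces the ODE
\begin{equation*}
\alpha'(t) = \tfrac{1}{2}\bigl(c[v] - [v(u-1)]\bigr), \qquad \alpha(0) = 0,
\end{equation*}
which integrates to the announced $\alpha(t)$.

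The crux of the argument, and the main obstacle, is the treatment of the concentrated squares. The contribution $\alpha^2 \omega_\eps^2$ to the flux $\tfrac{1}{2}(u_\eps^2 + v_\eps^2)$ is $1/\eps$-singular in $\mathcal{D}'$, and no real-valued perturbation can cancel it because any additional real square would enter with the same sign. The identity $i^2 = -1$ is precisely what makes the complex ansatz work: each imaginary correction contributes a \emph{negative} square, so the net $1/\eps$-order singular coefficient of $\tfrac{1}{2}(u_\eps^2 + v_\eps^2)$ is proportional to $\alpha^2\|\omega\|_{L^2}^2 - P^2\|\Phi_1\|_{L^2}^2 - Q^2\|\Phi_2\|_{L^2}^2$. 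Imposing
\begin{equation*}
\alpha(t)^2\,\|\omega\|_{L^2}^2 = P(t)^2\,\|\Phi_1\|_{L^2}^2 + Q(t)^2\,\|\Phi_2\|_{L^2}^2
\end{equation*}
eliminates this divergence, and the orthogonality of $\Phi_1,\Phi_2$ to $\omega$ (and to each other) kills the analogous $1/\eps$-divergences arising from $\omega_\eps\Phi_{j,\eps}$ and $\Phi_{1,\eps}\Phi_{2,\eps}$ in the second residual. With $c$, $\alpha$, $P$, $Q$ fixed in this way, every other term in the two residuals is either uniformly $O(\eps)$ on compact sets or the distributional derivative of a family that vanishes in $\mathcal{D}'$, hence $o_{\mathcal{D}'}(1)$ uniformly for $t$ in compact subintervals of $\R_+$; this establishes \eqref{brioapp} and completes the proof of part (a).
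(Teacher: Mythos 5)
Your overall strategy is the same as the paper's: smooth real profiles carrying the step and the concentrated mass, plus purely imaginary concentrated corrections whose squares enter the flux with a minus sign (via $i^2=-1$) and cancel the $1/\eps$-order divergence of the $\delta^2$-type products. The cancellation condition $\alpha^2\|\omega\|_{L^2}^2=P^2\|\Phi_1\|_{L^2}^2+Q^2\|\Phi_2\|_{L^2}^2$ is exactly the paper's device $R_\eps^2+\delta_\eps^2=o_{\mathcal{D}'}(1)$. However, your ansatz has a genuine gap at the level of the $O(1)$, $\delta'$-type residuals, which you never balance. Because the real part of $u_\eps$ is a plain mollified step $u_1+[u]\mathcal{H}_\eps$, the product of $u_\eps$ with the concentrated part of $v_\eps$ obeys $\mathcal{H}_\eps\,\omega_\eps\rightharpoonup \tfrac12\delta$, so $\pa_x\big(v_\eps(u_\eps-1)\big)$ contains the term $\alpha(t)\big(\tfrac{u_1+u_2}{2}-1\big)\delta'(x-ct)$, which must cancel the term $-c\,\alpha(t)\,\delta'(x-ct)$ coming from $\pa_t v_\eps$. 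This forces $c=\tfrac{u_1+u_2}{2}-1$, which is incompatible with the Rankine--Hugoniot value $c=\tfrac{u_1+u_2}{2}+\tfrac{v_1^2-v_2^2}{2(u_1-u_2)}$ imposed by the first equation, except in the special case $v_1^2-v_2^2=2(u_2-u_1)$. Likewise, in the first equation the cross term $(v_1+[v]\mathcal{H}_\eps)\,\alpha\,\omega_\eps$ inside $\tfrac12 v_\eps^2$ leaves an uncancelled residual $\alpha(t)\tfrac{v_1+v_2}{2}\,\delta'(x-ct)$. Your imaginary corrections, with the parity and orthogonality constraints you impose, contribute only to the $1/\eps$-order $\delta$-type divergences and produce no compensating $\delta'$ terms, so neither mismatch can be repaired within your ansatz. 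This is precisely why the paper does not use mollified Heaviside profiles: it inserts an intermediate plateau, $U_\eps\equiv c+1$ and $V_\eps\equiv 0$ on $|x-ct|<10\eps$, and places all concentrated profiles inside that plateau with pairwise disjoint supports. Then $V_\eps$ times the concentrated part of $v_\eps$ vanishes identically (no spurious $\delta'$ in the first equation), and $U_\eps\delta_\eps\rightharpoonup 2(c+1)\delta$, so the flux produces exactly the $c\alpha\delta'$ needed to match $\pa_t(\alpha\delta)$ for the Rankine--Hugoniot speed $c$, whatever its value.

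A secondary but real error: your derivation of the factor $\tfrac12$ in $\alpha(t)$ is misattributed. The relation $\mathcal{H}_\eps\omega_\eps\rightharpoonup\tfrac12\delta$ contributes to the $\delta$-part of the flux, whose $x$-derivative is a $\delta'$ term; it plays no role in the $\delta$-coefficient balance $-c[v]+\alpha'+[v(u-1)]=0$ that determines $\alpha'$. With your mass-one profile ($\omega_\eps\rightharpoonup\delta$, hence $v_\eps\rightharpoonup V_0+\alpha\delta$), that balance yields $\alpha'(t)=c[v]-[v(u-1)]$ with no $\tfrac12$, consistent with Theorem 2.1 a); in the paper the $\tfrac12$ is a bookkeeping artifact of $\delta_\eps$ there consisting of two bumps of unit mass each, so the ansatz coefficient is half of the coefficient of $\delta$ in the limit. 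Finally, the phrase ``suitable orthogonality properties'' is carrying several unstated conditions ($\int\mathcal{H}\Phi_j=0$, $\int z\,\omega\Phi_j=0$, $\int z\,\Phi_j^2=0$, and the analogues for $\Phi_1\Phi_2$) that are needed to kill the $O(1)$ remainders of the concentrated products; these should either be verified by parity arguments or, as in the paper, made trivial by choosing profiles with pairwise disjoint supports.
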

\setcounter{thm}{0}
\begin{thm}
\label{th-3}
{\bf b)} If $v_1\neq v_2$ then there exist weak asymptotic solutions
$(u_\eps)$, $(v_\eps)$ of the Brio system \eqref{brio1}, such that
the families $(u_\eps)$ and $(v_\eps)$ have distributional limits
\begin{eqnarray}
\label{BrioRiemann1-b} u(x,t) & = & U_0(x-ct)+ \alpha(t)
\delta(x-ct), \\
\label{BrioRiemann2-b} v(x,t) & = & V_0(x-ct) ,
\end{eqnarray}
where
\begin{equation}
\label{ec-b}c=\frac{v_1(u_1-1)-v_2(u_2-1)}{v_1-v_2} \ \ {\rm and} \
\
\alpha(t)=\left(c(u_2-u_1)+\frac{u^2_1+v_1^2-u^2_2-v^2_2}{2}\right)t.
\end{equation}
\end{thm}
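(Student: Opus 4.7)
The proof follows the same strategy as part (a), with the roles of $u$ and $v$ interchanged: the $\delta$-concentration now lives in $u$ rather than in $v$. Accordingly, the complex-valued correction must be built into $u_\eps$, since $u$ enters the first flux $(u^2+v^2)/2$ quadratically and that is precisely where the singular nonlinear interaction must be controlled. The $v$-equation, by contrast, contains $v$ only linearly and depends on $u$ only through the product $v(u-1)$, so the singular part of $u_\eps$ couples there only mildly.

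Following the construction of part (a), I would take
\begin{align*}
u_\eps(x,t) &= u_1 + [u]\,\theta_\eps(x-ct) + \Phi_\eps(x,t) + i\,\Psi_\eps(x,t),\\
v_\eps(x,t) &= v_1 + [v]\,\theta_\eps(x-ct),
\end{align*}
where $\theta_\eps$ is a standard smooth approximation of the Heaviside, $\Phi_\eps$ is a real-valued concentrating family designed so that $\Phi_\eps \rightharpoonup \alpha(t)\,\delta(x-ct)$ in $\mathcal{D}'$, and $\Psi_\eps$ is a real-valued family with $\Psi_\eps \rightharpoonup 0$ in $\mathcal{D}'$. The profiles and amplitudes of $\Phi_\eps$ and $\Psi_\eps$ are chosen exactly as in part (a) (with the two unknowns interchanged) so that the obstructive singular quantity $\Phi_\eps^2$ appearing in $u_\eps^2$ is cancelled by the corresponding term $-\Psi_\eps^2$ coming from $(i\Psi_\eps)^2 = -\Psi_\eps^2$, modulo $o_{\mathcal{D}'}(1)$. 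By construction, the distributional limits of $u_\eps$ and $v_\eps$ are real and equal to those in \eqref{BrioRiemann1-b}--\eqref{BrioRiemann2-b}.

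The $v$-equation is the easy one. Substituting the ansatz, integrating by parts, and using that $\theta_\eps$-times-mollifier products pair with $\partial_x \varphi$ to $\mathcal{O}(\eps)$, one is left with the single matching condition $c[v] = [v(u-1)]$, which is exactly the definition of $c$ in \eqref{ec-b}. All remaining contributions in the $v$-equation are bounded families multiplied by $\eps$-localized bumps and therefore pair to $o(1)$ against any test function.

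The main obstacle is the $u$-equation. Here $\partial_x(u_\eps^2/2)$ produces singular residues of the type $\Phi_\eps^2$ that have no distributional limit; it is precisely the cancellation engineered by the complex correction, via $i^2 = -1$, that removes this obstruction. Once this cancellation is in place, the remaining delta-supported residue paired with a test function reduces to
\[
\int_0^\infty \bigl( \alpha'(t) - c[u] + \bigl[\tfrac{u^2+v^2}{2}\bigr] \bigr) \varphi(ct,t)\, dt = o(1),
\]
which vanishes precisely for the $\alpha(t)$ specified in \eqref{ec-b}. The remaining terms, being products of uniformly bounded functions with mollifiers of shrinking support, are $o_{\mathcal{D}'}(1)$, so \eqref{brioapp} holds. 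The proof concludes with a direct verification that $\Psi_\eps \to 0$ in $\mathcal{D}'$, ensuring that the limit of $u_\eps$ is the real-valued distribution claimed in \eqref{BrioRiemann1-b}.
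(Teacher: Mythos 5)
Your overall strategy --- placing the complex correction in $u_\eps$, cancelling the non-distributional part of $u_\eps^2$ via $i^2=-1$, and reading off $c$ and $\alpha$ from Rankine--Hugoniot-type deficits --- is the right starting point and matches the spirit of the paper's construction. However, ``interchange the two unknowns'' does not transfer the mechanism of part (a), because the fluxes are not symmetric in $u$ and $v$, and this leaves a genuine gap in the $u$-equation. You only match the coefficient of $\delta(x-ct)$, but $\partial_t\bigl(\alpha(t)\delta(x-ct)\bigr)$ also produces $-c\,\alpha(t)\,\delta'(x-ct)$, which must be cancelled by a term converging to $+c\,\alpha(t)\,\delta(x-ct)$ \emph{inside} the flux $(u_\eps^2+v_\eps^2)/2$ before $\partial_x$ is applied. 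In part (a) that term comes for free from the product structure of $g(u,v)=v(u-1)$: the paper sets $U_\eps\equiv c+1$ under the singular bump so that $v_\eps(u_\eps-1)$ contains $c\,\alpha(t)\delta_\eps$. In part (b) there is no such cross term in $f=(u^2+v^2)/2$, and if, as you propose, $\Phi_\eps^2-\Psi_\eps^2=o_{\mathcal{D}'}(1)$ exactly as in part (a), the singular part of $u_\eps^2$ contributes nothing at all, so the residual $-c\,\alpha(t)\,\delta'(x-ct)$ survives and is not $o_{\mathcal{D}'}(1)$. This is precisely why the paper's ansatz for (b) carries the additional real term $\sqrt{2c\alpha(t)}\,R_{2\eps}$ with $R_{2\eps}=\eps^{-1/2}\bigl[\rho((x-ct)/\eps)\bigr]^{1/2}$: its square converges weakly to a multiple of $\alpha(t)c\,\delta(x-ct)$ and supplies exactly the missing delta in the flux. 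Your ansatz has neither this term nor a substitute for it (such as giving $u_\eps$ the background value $c$ on the support of the bump), so the first equation of \eqref{brioapp} fails.

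A second, smaller gap: the $v$-equation is not as easy as you claim. The flux $v_\eps(u_\eps-1)$ contains the product $v_\eps\,\Phi_\eps$ of a Heaviside-type family with a delta-type family; generically this converges to $\bar v\,\alpha(t)\,\delta(x-ct)$ for some $\bar v$ between $v_1$ and $v_2$ determined by the profiles, and its $x$-derivative is a $\delta'$ term that nothing in $\partial_t v_\eps$ can cancel, since $v_\eps$ carries no singular part. The paper avoids this by building $V_\eps$ so that it vanishes identically on a neighbourhood of $x=ct$ containing the supports of all singular profiles, so that $V_\eps\delta_\eps\equiv V_\eps R_{1\eps}\equiv V_\eps R_{2\eps}\equiv 0$; the analogous support separation is also what makes the cross terms in $u_\eps^2$ vanish. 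You need to make this arrangement explicit; dismissing these products as pairing to $o(1)$ is not correct.
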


\begin{proof}
{\bf a)} Let $\rho\in C^\infty_c(\R)$ be an even, non-negative, smooth,
compactly supported function such that
$$
{\rm supp} \rho\subset (-1,1), \ \ \int_\R \rho(z)dz=1, \ \ \rho\geq
0.
$$ We take:
\begin{equation}
\label{aux-1}
\begin{split}
&  R_{\eps}(x,t)
       =\Sfrac{i}{\eps}\rho((x-ct-2\eps)/\eps)-\Sfrac{i}{\eps}\rho((x-ct+2\eps)/\eps), \\
&  \delta_{\eps}(x,t)=
\Sfrac{1}{\eps}\rho((x-ct-4\eps)/\eps)+\Sfrac{1}{\eps}\rho((x-ct+4\eps)/\eps).
\end{split}
\end{equation}
Next, define smooth functions $U_\eps$ and $V_\eps$ such that
\begin{align*}
U_\eps(x,t)&=\begin{cases} u_1, & x<ct-20\eps,\\
c+1, & ct-10\eps < x < ct+10\eps,\\
u_2, & x > ct+20\eps,
\end{cases}\\
V_\eps(x,t)&=\begin{cases} v_1, &  x<ct-20\eps,\\
0, & ct-10\eps < x < ct+10\eps,\\
v_2, & x > ct+20\eps.
\end{cases}
\end{align*}
Notice that
\begin{equation}
\label{aux-2}
  R_{\eps}\rightharpoonup 0,       \quad
  U_\eps R_{\eps} \rightharpoonup 0, \quad \mbox{ and } \quad
  U_\eps \delta_\eps \rightharpoonup 2 (c+1)\delta(x-ct).
\end{equation}
Moreover, we have
\begin{equation}
\label{aux-3}
V_\eps R_{\eps}\equiv 0, \quad
V_\eps \delta_\eps \equiv 0, \quad \mbox{ and } \quad
\delta_{\eps} R_{\eps} \equiv  0.
\end{equation}
Now make the ansatz
\begin{equation}
\label{new_1}
\begin{split}
u_\eps(x,t)&=U_\eps(x,t),\\
v_\eps(x,t)&=V_\eps(x,t) + \alpha(t)(\delta_{\eps}(x,t)+R_{\eps}(x,t)),
\end{split}
\end{equation}
and substitute it into equations \eqref{brioapp}.
Notice first of all that
\begin{align*}
v_\eps^2(x,t) = V_\eps^2 + \alpha^2(t)(R_{\eps}^2+\delta^2_\eps)
\end{align*}
by invoking \eqref{aux-3}.
Focusing on the expression $R_{\eps}^2+\delta^2_\eps$,
we take $\varphi\in C^\infty_c(\R)$ and consider the integral
\begin{align*}
&\int_{\R}(R_{\eps}^2+\delta^2_\eps)\varphi~dx \\
&=\int_{\R}\Sfrac{1}{\eps^2}\Big(-\rho^2((x-ct+2\eps)/\eps)-\rho^2((x-ct-2\eps)/\eps)\\
& \qquad\qquad
 + \rho^2((x-ct+4\eps)/\eps)+\rho((x-ct-4\eps)/\eps)\Big) \varphi~dx ={\cal O}(\eps).
\end{align*}
In the above reasoning, use was made of the following computation.
\begin{align*}
&\int_{\R}\Sfrac{1}{\eps^2}\left(\rho^2((x-ct+\alpha\eps)/\eps)
+\rho^2((x-ct-\beta\eps)/\eps)\right)\varphi(x)~dz
\\
&= \int_{\R}\Sfrac{1}{\eps}\rho^2(z)\left(
\varphi(ct+\eps(z-\alpha))+\varphi(ct+\eps(z+\beta)) \right)~dz\\
&=\int_{\R}\Sfrac{1}{\eps}\rho^2(z)\left(
2 \varphi(ct)+\eps\varphi'(ct)(\beta-\alpha)\right)dz +{\cal O}(\eps),
\quad \mbox{ for } \alpha, \beta \in \R.
\end{align*}
The last relation was found by making the changes of variables
$(x-ct+\alpha\eps)/\eps=z$ and $(x-ct-\beta\eps)/\eps=z $,
and observing that
$\int z\rho^2(z)dz=0$ since $\rho$ is an even function.
In the case at hand, we use $\alpha=\beta = 2$ for the first integral,
and  $\alpha=\beta = 4$ in the second integral.
Finally, it becomes plain that
\begin{equation}
\label{asym} v_\eps^2=V_\eps^2+ o_{{\cal D}'}(1).
\end{equation}
Therefore, taking into account Definition \ref{defwas}, from the
first equation in \eqref{brioapp}, we conclude that we need to check
whether
\begin{align*}
\pa_t U_\eps + \pa_x\frac{U_\eps^2+V_\eps^2}{2}=o_{{\cal D}'}(1),
\end{align*} and this reduces to
\begin{align}
\label{eq1} \int_0^T\left( -c[U] +
\Sfrac{1}{2}[U^2+V^2]\right)\varphi(ct,t)dt=o(1),
\end{align} where $[U]=u_2-u_1$ and
$[U^2+V^2]=u_2^2+v_2^2-u_1^2-u_2^2$.

However, this is indeed satisfied thanks to the choice of the
constant $c$ which was found from the Rankine-Hugoniot condition for
the first equation in \eqref{brio1}.

Let us now consider the second equation in \eqref{brioapp}.
First, notice that
\begin{equation*}
\begin{split}
&\pa_x (v_\eps(u_\eps-1)) = \pa_x\left(U_\eps V_\eps
     + (c+1)\alpha(t)\delta_\eps-V_\eps-\alpha(t)\delta_\eps \right) + o_{{\cal D}'}(1)\\
&\qquad\qquad\qquad= (v_1(1-u_1)+v_2(u_2-1)\delta(x-ct)+ c \alpha(t)\delta'(x-ct))
 + o_{{\cal D}'}(1).
\end{split}
\end{equation*}
Next, note also that
\begin{equation*}
\pa_tv_\eps=-c(v_2-v_1)\delta(x-ct)+\alpha'(t) \delta(x-ct) -c \alpha(t)
\delta'(x-ct)+o_{{\cal D}'}(1).
\end{equation*}
Adding the latter two expressions, we obtain
\begin{align*}
&\pa_tv_\eps+\pa_x\left(v_\eps(u_\eps-1)\right)\\&
=\left(-c(v_2-v_1)+
        \alpha'(t) +  (v_1(1-u_1)+v_2(u_2-1) \right)\delta(x-ct)+o_{{\cal D}'}(1).
\end{align*}
From here, we conclude that choosing $\alpha$ as given in
\eqref{ec-a}, the first equation in \eqref{brioapp} is satisfied, as
well. This concludes the proof of part (a).

\vspace{0.5cm}
\noindent
{\bf b)} In this case, an appropriate weak asymptotic solution is given by
\begin{equation*}
\begin{split}
u_\eps(x,t)&=U_\eps(x,t) + \alpha(t)(\delta_{\eps}(x,t)
                      +  R_{1\eps}(x,t)) + \sqrt{2 c \alpha(t)} R_{2\eps}(x,t)),\\
v_\eps(x,t)&=V_\eps(x,t),
\end{split}
\end{equation*}
where
\begin{equation*}
c=\frac{v_1u_1-v_2u_2}{v_1-v_2}-1 \ \ {\rm and} \ \
\alpha(t)=\left(c(u_1-u_2)-\frac{u_1^2+v_1^2-u_2^2-v_2^2}{2}\right)t,
\end{equation*}and
\begin{align*}
U_\eps(x,t)&=\begin{cases} u_1, & x<ct-20\eps\\
0, & ct-10\eps < x < ct+10\eps\\
u_2, & x > ct+20\eps
\end{cases}\\
V_\eps(x,t)&=\begin{cases} v_1, & x<ct-20\eps\\
0, & ct-10\eps < x < ct+10\eps\\
v_2, & x > ct+20\eps
\end{cases};
\end{align*}
\begin{equation*}
\begin{split}
&R_{1\eps}(x,t)=\frac{i}{\eps}\rho((x-ct-2\eps)/\eps)-\frac{i}{\eps}\rho((x-ct+2\eps)/\eps);
\\
%&R_{2\eps}(x,t)=\Sfrac{(p+iq)}{\sqrt{\eps}} \left[ \rho((x-ct)/\eps) \right]^{\frac{1}{2}};
&R_{2\eps}(x,t)=\frac{1}{\sqrt{\eps}} \left[ \rho((x-ct)/\eps) \right]^{\frac{1}{2}};
\\
&\delta_{\eps}(x,t)=
\frac{1}{\eps}\rho((x-ct-4\eps)/\eps)+\frac{1}{\eps}\rho((x-ct+4\eps)/\eps),
\end{split}
\end{equation*}
where
%$p=1$, $q=0$ if $c\geq 0$ and $p=0$, $q=1$ if $c<0$, and
$\rho$ is the same smooth non-negative even function as used in the
previous examples. The proof then follows the ideas of the proof of (a).
\end{proof}

An important corollary (to be used in the Appendix) of the proof of
the previous theorem is that it gives another interesting class of weak
asymptotic solutions to \eqref{brio1} having the $\delta$
distribution as their limit.
\begin{corollary}
\label{only}
If $u_1=u_2$ and $v_1^2=v_2^2$ then for any $c\in \R$
the families $(u_\eps)$ and $(v_\eps)$ given by \eqref{new_1} are
the weak asymptotic solution to \eqref{brio1}, \eqref{rieman-data}.
\end{corollary}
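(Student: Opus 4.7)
The plan is to revisit the proof of Theorem~\ref{th-3}(a) and observe that, under the degeneracy $u_1=u_2$ and $v_1^2=v_2^2$, both Rankine--Hugoniot-type constraints on the speed $c$ become vacuous, which is exactly what frees $c$ as an arbitrary parameter.

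First, I would fix an arbitrary $c\in\R$ and set up $R_\eps$, $\delta_\eps$, $U_\eps$ and $V_\eps$ exactly as in \eqref{aux-1} and the formulas right after it, but now with this arbitrary $c$ as translation speed. The weak limits \eqref{aux-2}, the pointwise product identities \eqref{aux-3}, and the key asymptotic $v_\eps^2 = V_\eps^2 + o_{\mathcal{D}'}(1)$ from \eqref{asym} all rely only on the even symmetry of $\rho$ (via $\int z\,\rho^2(z)\,dz=0$) and on the $\eps$-scaled geometric separation of the supports; none of them involves the specific value of $c$. They are therefore available verbatim.

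Second, I would plug the ansatz \eqref{new_1} into the first equation in \eqref{brioapp}. As in the proof of Theorem~\ref{th-3}(a), the condition reduces to \eqref{eq1}:
$$
\int_0^T\Bigl(-c[U]+\tfrac{1}{2}[U^2+V^2]\Bigr)\varphi(ct,t)\,dt = o(1).
$$
By hypothesis $[U]=u_2-u_1=0$ and $[U^2+V^2]=v_2^2-v_1^2=0$, so the integrand vanishes identically and the estimate holds for any $c$, without any Rankine--Hugoniot selection.

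Third, I would handle the second equation. Repeating the cancellation computation from the proof of Theorem~\ref{th-3}(a) verbatim leaves a surviving coefficient
$$
-c(v_2-v_1)+\alpha'(t)+v_1(1-u_1)+v_2(u_2-1)
$$
multiplying $\delta(x-ct)$, and I would kill it by choosing
$$
\alpha(t)=\bigl[\,c(v_2-v_1)+v_1(u_1-1)-v_2(u_2-1)\,\bigr]\,t,
$$
which also satisfies $\alpha(0)=0$, so the Riemann initial data \eqref{rieman-data} is respected. The main obstacle is conceptual rather than computational: one must confirm that the auxiliary estimates \eqref{aux-2}--\eqref{asym} really carry no hidden constraint on $c$, after which the degeneracy $[U]=[U^2+V^2]=0$ genuinely liberates the speed, and the remainder is a direct transcription of the proof of Theorem~\ref{th-3}(a).
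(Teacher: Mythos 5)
Your proposal is correct and follows essentially the same route as the paper: the paper's own proof is precisely the observation that the only constraint on $c$ comes from \eqref{eq1}, which is vacuous since $[U]=[U^2+V^2]=0$ under the hypotheses, with the rest of the construction carried over unchanged from the proof of part (a). Your additional verification that the auxiliary estimates \eqref{aux-2}--\eqref{asym} and the choice of $\alpha$ place no hidden restriction on $c$ is a faithful elaboration of what the paper leaves implicit.
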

\begin{proof}
It is enough to notice that \eqref{eq1} is satisfied independently
on $c$ since $[U]=[U^2+V^2]=0$.
\end{proof}

To close the section, we should mention that while the extension of the weak
asymptotics method to complex-valued solutions was crucial for finding a solution
of the system \eqref{brio1}, it might not be appropriate in other contexts
as it might lead to strong non-uniqueness.
For example, using complex-valued weak asymptotic solutions of similar form
as \eqref{BrioRiemann1-b} for the inviscid
Burgers equation, one may construct a family of distinct solutions emanating
from the same initial data, all of which also satisfy
the Lax admissibility condition.

\section{Generalized weak solutions for the Brio system and the uniqueness issue}

By comparing Theorem \ref{thm-cnl} and Theorem \ref{th-3}, we see
that the limit distributions $u$ and $v$ given in Theorem \ref{th-3}
represent $\delta$-shock solutions to \eqref{brio1} with initial
data $u(x,0) = U_0(x)$ and $v(x,0) = V_0(x)$. However, we want to
incorporate such solutions into the Lax admissibility concept and
this is the goal in this section.

We focus on Definition \ref{def-gen} b) from \cite{DSH} where the fluxes $f$ and $g$
are given by the Brio system. The same can be
done with Definition \ref{def-gen} a) but it appears that the
solutions which it generates do not fit into the Lax
admissibility concept. More details of this case will be provided in the
Appendix.

Recall that in the case $v_1>0>v_2$ there exists no Lax-admissible
solution to the Riemann problem \eqref{brio1}, with the Riemann
initial data $U_0$ and $V_0$ given by \eqref{rieman-data} (see
\cite{Lfl2}). If $v_1$ and $v_2$ do not satisfy this relation, we
have the classical Lax admissible solution to the appropriate
Riemann problem consisting of the elementary waves, i.e. shock and
rarefaction waves.
For $L^{\infty}$-small data, such solution is unique since the
system is genuinely nonlinear for $v>0$ and $v<0$. Theorem
\ref{thm-cnl} states that we can also have $\delta$-shock wave
solutions, but as Proposition \ref{non-unique} shows, there is
strong non-uniqueness. In order to eliminate at least some of
solutions which are inconsistent with the physical intuition, we
shall use the Lax compressivity conditions for the $\delta$-shock
wave. In order to introduce them, let us recall that the
characteristic velocities for the Brio system are \cite{Lfl2}:
$$
\lambda_1(u,v)=u-1/2-\sqrt{1/4+v^2}, \ \
\lambda_2(u,v)=u-1/2+\sqrt{1/4+v^2}.
$$
The corresponding rarefaction waves are given by
\begin{align*}
RW_1: \; u=-\frac{1}{2}\left(
\sqrt{4v^2+1}-log(1+\sqrt{4v^2+1})\right)+C_1;\\
RW_2: \; u=\frac{1}{2}\left(
\sqrt{4v^2+1}+log(-1+\sqrt{4v^2+1})\right)+C_2;
\end{align*} The shock waves are given by
$$
SW_{1,2}: \; u-u_1=\frac{v-v_1}{v+v_1}\left(1\mp \sqrt{(v+v_1)^2+1}
\right).
$$
A phase space picture for a given left and right state is shown in Figure 1.
\begin{figure}[htp]
\begin{center}
  \includegraphics[width=4in]{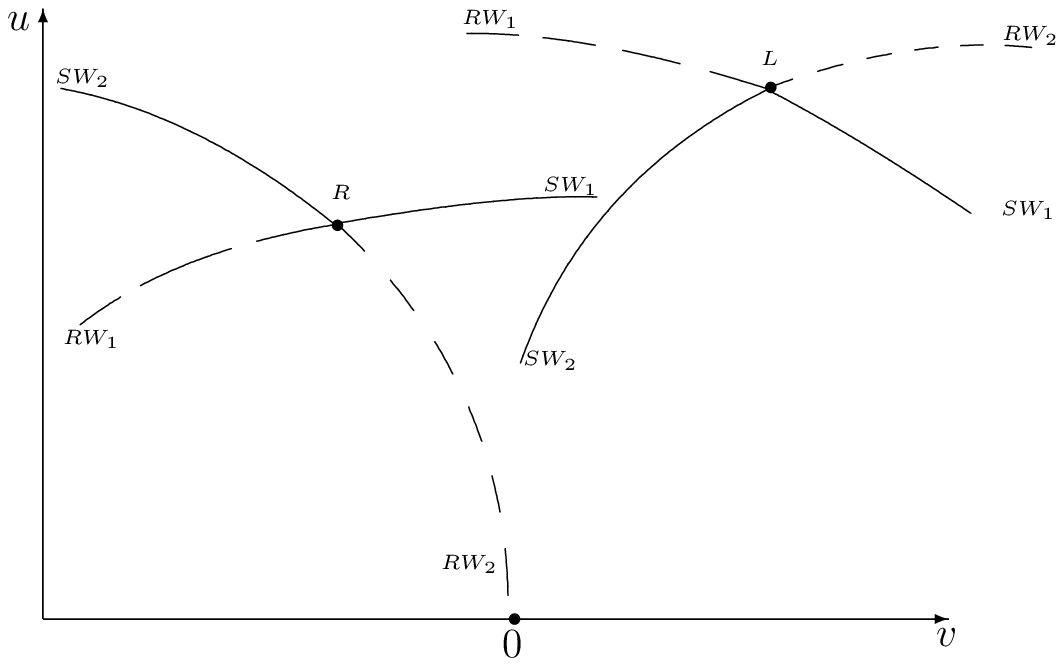}\\
  \caption{$L$ is the left, and $R$ is the right state.}
  \label{fig1}
  \end{center}
\end{figure}
%
%\placedrawing{km1.lp}{$L$ is the left, and $R$ is the right
%state.}{fig1}
%

The following definition introduces a compressivity demand on the
characteristics of \eqref{brio1} meaning that the characteristics
enters the $\delta$-shock from both sides. It is standard for the
classical shock waves and they are known as Lax admissibility
conditions. Note the usual demand on the $\delta$-shock wave is an
overcompressivity condition demanding that both characteristic field
$\lambda_1$ and $\lambda_2$, satisfy \eqref{v-f} below
\cite{DM2,DSH, ercole, KK1, TZZ}. However, we were not able to find
solutions involving overcompressive $\delta$-shocks and we confine
ourselves on a less restrictive demand which still includes
concentration effects. The definition concerning the admissible
$\delta$-shock solutions of \eqref{brio1} such as defined in Theorem
3.1 follows.
\begin{definition}
\label{admis}
A $\delta$-shock solution of \eqref{brio1},
connecting a left state $L = (u_1,v_1)$ and a right state $R = (u_2,v_2)$
is $i$-admissible if
%there exists a vector
%field $\lambda_i$, $i=1,2$, such that the overcompressivity holds
\begin{equation}
\label{v-f} \lambda_i(u_2,v_2) \leq c \leq \lambda_i(u_1,v_1),
\end{equation}
for $i=1$ or $i=2$. For such $\delta$ shock wave we say that it is
compressive.
\end{definition}

Thus for a general Riemann problem, one may say that a solution of
\eqref{brio1}, \eqref{rieman-data} which contains a $\delta$-shock
wave is admissible if it consists of a combination of the classical
Lax admissible simple waves (shock or rarefaction) and compressive
$\delta$ waves.

The following lemma will be crucial for existence of admissible
$\delta$-shock solutions to Riemann problems corresponding to
\eqref{brio1}.

\begin{lemma}
\label{l-v} Assume that the initial data \eqref{rieman-data} are
such that $u_1=u_2=\tilde{u}$, $v_1=0$ and $v_2<0$. Then, the
$\delta$-shock solution
\begin{equation}
\label{even-v}
\begin{split}
u(x,t)= & \tilde{u} + \alpha(t) \delta(x-ct), \\
v(x,t)= & 0,
\end{split}
\end{equation}
where $\alpha(t)$ and $c$ are given by \eqref{ec-b},
represents $1$-admissible $\delta$-shock solution.
\end{lemma}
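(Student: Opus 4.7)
The plan is to proceed by direct computation: use Theorem \ref{th-3}(b) to certify that the pair in \eqref{even-v} is indeed a $\delta$-shock solution, and then verify the Lax inequality \eqref{v-f} with $i=1$.

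First, I would specialise the formulas \eqref{ec-b} to the data $u_1=u_2=\tilde u$, $v_1=0$, $v_2<0$. The velocity collapses to
\[
c=\frac{v_1(u_1-1)-v_2(u_2-1)}{v_1-v_2}=\frac{-v_2(\tilde u-1)}{-v_2}=\tilde u-1,
\]
while the amplitude simplifies to
\[
\alpha(t)=\Bigl(c(u_2-u_1)+\tfrac{u_1^2+v_1^2-u_2^2-v_2^2}{2}\Bigr)t=-\tfrac{v_2^2}{2}\,t.
\]
Since $v_1\neq v_2$, Theorem \ref{th-3}(b) furnishes a weak asymptotic family whose distributional limit is \eqref{even-v}, so the integral identities \eqref{m1-g2}--\eqref{m2-g2} are satisfied. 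It remains only to check the admissibility condition of Definition \ref{admis}.

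Second, I would evaluate the slow characteristic $\lambda_1(u,v)=u-\tfrac{1}{2}-\sqrt{\tfrac{1}{4}+v^2}$ at each state. Because $v_1=0$, one has $\sqrt{1/4}=1/2$ and therefore $\lambda_1(u_1,v_1)=\tilde u-1$, which coincides exactly with $c$. For the right state, $v_2<0$ forces $v_2^2>0$, so $\sqrt{1/4+v_2^2}>1/2$ strictly, and hence
\[
\lambda_1(u_2,v_2)=\tilde u-\tfrac{1}{2}-\sqrt{\tfrac{1}{4}+v_2^2}<\tilde u-1=c.
\]
Both halves of \eqref{v-f} with $i=1$ are therefore met (with equality on the right), which establishes $1$-admissibility.

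There is no genuine obstacle here; the only delicate points are noting that Definition \ref{admis} requires only non-strict inequalities, so the saturated equality $c=\lambda_1(u_1,v_1)$ on the $v_1=0$ side is still admitted, and observing that the analogous check for $i=2$ fails: while $\lambda_2(u_1,v_1)=\tilde u>c$ is harmless, the computation $\lambda_2(u_2,v_2)=\tilde u-\tfrac{1}{2}+\sqrt{\tfrac{1}{4}+v_2^2}>\tilde u>c$ violates the left-hand bound in \eqref{v-f}. Hence only the $1$-admissible branch survives, as asserted.
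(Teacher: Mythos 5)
Your proof is correct and follows essentially the same route as the paper: invoke the earlier existence theorem for the $\delta$-shock structure, specialise $c=\tilde u-1$ from \eqref{ec-b}, and check $\lambda_1(u_2,v_2)\leq c\leq\lambda_1(u_1,v_1)$ using $\sqrt{1/4+v_2^2}>1/2$. The only cosmetic difference is that the paper cites Theorem \ref{thm-cnl} b) (the direct verification of Definition \ref{def-gen}) rather than Theorem \ref{th-3} b), but these yield the same limit distributions, and your extra observation that $2$-admissibility fails is a harmless addition.
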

\begin{proof}
The functions given by \eqref{even-v} represent $\delta$ shock
solution to \eqref{brio1}, \eqref{rieman-data} according to Theorem
\ref{thm-cnl}, b). In order to prove that the solution is
$1$-admissible, recall that
$c=\frac{v_2(u_2-1)-v_1(u_1-1)}{v_2-v_1}$. Then, due to \eqref{v-f},
we need to show:
\begin{equation*}
\begin{split}
\lambda_1(u_2,v_2)&=u_2-1/2-\sqrt{1/4+v_2^2} \leq
\frac{v_2(u_2-1)-v_1(u_1-1)}{v_2-v_1} \\&\leq
u_1-1/2-\sqrt{1/4+v_1^2}=\lambda_1(u_1,v_1).
\end{split}
\end{equation*} Since $u_1=u_2=\tilde{u}$ and $v_1=0$, the latter reduces to
\begin{equation*}
\tilde{u}-1/2-\sqrt{1/4+v_2^2} \leq \tilde{u}-1 \leq \tilde{u}-1
\Rightarrow 1/2-\sqrt{1/4+v_2^2} \leq 0,
\end{equation*} which is clearly true. This concludes the proof.
\end{proof}

With this lemma established, we can attempt the proof of
the following theorem.

\begin{thm}
Given any Riemann initial data \eqref{rieman-data} such that
$v_2<0<v_1$, there exists a solution of \eqref{brio1} in the sense
of Definition \ref{def-gen} which consists of a combination of the
classical Lax admissible simple waves (shock or rarefaction) and
compressive $\delta$ waves, $1$-admissible in the sense of
Definition \ref{admis}.
\end{thm}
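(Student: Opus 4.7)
The plan is to construct the Riemann solution as a composite of classical Lax-admissible waves and a compressive $\delta$-shock of the type produced by Theorem \ref{thm-cnl} b). The guiding principle is that \eqref{brio1} is genuinely nonlinear on each of the open half-planes $\{v>0\}$ and $\{v<0\}$, so the classical shock curves $SW_{1,2}$ and rarefaction curves $RW_{1,2}$ are well defined on either side of the axis $v=0$; under the hypothesis $v_2<0<v_1$, the only obstruction to a purely classical Riemann solution is the crossing of this axis, and this is exactly what a compressive $\delta$-shock is designed to accomplish.

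Concretely, I would attempt the three-piece ansatz: a classical $1$-wave from $L=(u_1,v_1)$ to an intermediate state $M_1=(u_{m_1},v_{m_1})$ with $v_{m_1}\ge 0$, then a $\delta$-shock from $M_1$ to $M_2=(u_{m_2},v_{m_2})$ with $v_{m_2}\le 0$, and finally a classical $2$-wave from $M_2$ to $R=(u_2,v_2)$; any of the three pieces is allowed to degenerate. Each of the wave curves is a one-parameter family, so the construction carries two free parameters, while the $\delta$-shock speed $c$ and amplitude $\alpha(t)$ are then determined by \eqref{ec-b}. The conditions that must be satisfied are (a) speed ordering --- the rightmost speed of the $1$-wave is at most $c$, and $c$ is at most the leftmost speed of the $2$-wave, so that the three regions fit together as a single-valued function of $(x,t)$ --- and (b) $1$-admissibility of the $\delta$-shock, $\lambda_1(M_2)\le c\le \lambda_1(M_1)$, in the sense of Definition \ref{admis}.

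The main obstacle is to show that these inequalities can be jointly satisfied for every pair $v_2<0<v_1$. My approach is to anchor the argument at the symmetric degenerate configuration $u_{m_1}=u_{m_2}$, $v_{m_1}=0$, $v_{m_2}<0$ covered by Lemma \ref{l-v}, in which $1$-admissibility is established unconditionally, and then to run a continuity/intermediate-value argument in the two free parameters. The explicit formulas recorded above for the wave curves and for the characteristic speeds $\lambda_i(u,v)=u-\frac{1}{2}\pm\sqrt{\frac{1}{4}+v^2}$, together with the monotonicity of $\lambda_1$ along each wave curve, should let me conclude that an admissible choice of $M_1,M_2$ exists in general. Once the intermediate states are in hand, the verification of the integral identities \eqref{m1-g2}, \eqref{m2-g2} of Definition \ref{def-gen} b) reduces to a piecewise application of Theorem \ref{thm-cnl} b) on each sub-region of $(x,t)$.
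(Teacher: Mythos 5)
Your three-piece decomposition (classical $1$-wave from $L$ to a state on the axis $v=0$, then a $\delta$-shock crossing into $\{v<0\}$ anchored at the configuration of Lemma \ref{l-v}, then a classical $2$-wave to $R$) is exactly the construction used in the paper, except that the paper commits immediately to the degenerate anchor itself --- $M_1=(u_m,0)$ is the point where the RW1 curve through $L$ meets the $u$-axis and $M_2=(u_m,v_m)$ has the \emph{same} $u_m$ --- so $1$-admissibility of the $\delta$-shock is given unconditionally by Lemma \ref{l-v} and no two-parameter continuity/intermediate-value argument is needed. The only remaining checks in the paper are the speed ordering $\lambda_1(u_1,v_1)<\lambda_1(u_m,0)=c=u_m-1<(\text{speed of the final } 2\text{-wave})$, verified directly from the formulas for RW2 and SW2 using $v_2<0$, $v_2-v_m>0$, $u_2<u_m$, together with the (figure-based) fact that some $v_m<0$ places $R$ on a $2$-wave curve through $(u_m,v_m)$.
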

\begin{proof}
The solution is plotted on Figure \ref{sol-v}. First, we have the
rarefaction wave--$1$ (RW1) issuing from the left state $L=(u_1,v_1)$
and connecting it to the state $(u_m,0)$. Then, we connect the state
$(u_m,0)$ with the state $(u_m, v_m)$  by the $\delta$-shock wave,
and finally we connect $(u_m, v_m)$ with $R=(u_2,v_2)$ by the shock
wave--$2$ (SW2) or rarefaction wave--$2$ (RW2).

The solution is admissible since all the simple shocks which it contains
are admissible. Namely, Lemma \ref{l-v} provides
admissibility for the $\delta$-shock wave while other waves are
admissible according to the standard theory (see Figure \ref{fig1}).
Furthermore, such combination of shocks is clearly possible since
the speed of the state $L$ equals to $\lambda_1(u_1,v_1)$ and it is
less than the speed $\lambda_1(u_m,0)$ of the middle point $(u_m,0)$
(since they are connected by the rarefaction wave). Furthermore, the
speed of the $\delta$ shock connecting $(u_m,0)$ and $(u_m,v_m)$
equals $v_m(u_m-1)/v_m=\lambda_1(u_m,0)$ and it is slower than
the speed of the state $(u_m,v_m)$ which equals either
$\lambda_2(u_m,v_m)$ (if we have RW2 between $(u_m,v_m)$ and
$(u_2,v_2)$) or we have
$c=\frac{v_2(u_2-1)-v_m(u_m-1)}{v_2-v_1}<\lambda_1(u_m,0)$ (if we
have SW2 between $(u_m,v_m)$ and $(u_2,v_2)$). Both situations are
plotted in the phase space picture shown in Figure \ref{phase}.

Indeed, if $(u_m,v_m)$ is connected to $R=(u_2,v_2)$ by the RW2,
then the speed of $(u_m,v_m)$ is
$\lambda_2(u_m,v_m)>\lambda_1(u_m,0)$. On the other hand, if
$(u_m,v_m)$ is connected to $R=(u_2,v_2)$ by the SW2, then its speed
is
$$
\frac{v_2(u_2-1)-v_m(u_m-1)}{v_2-v_m}=u_m-1+v_2\frac{u_2-u_m}{v_2-v_m}>\lambda_1(u_m,0)=u_m-1,
$$since $v_2<0$, $v_2-v_m>0$, and $u_2<u_m$ (see Figure
\ref{sol-v}).
\end{proof}

\begin{figure}[htp]
\begin{center}
  \includegraphics[width=4in]{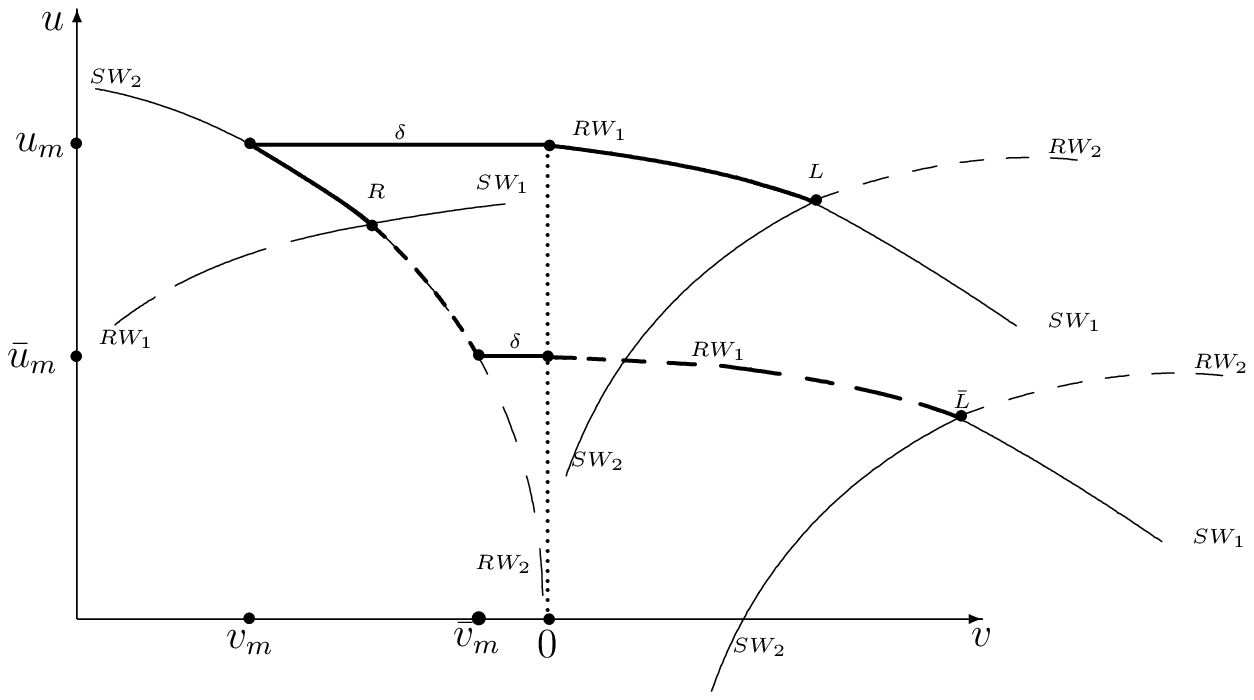}\\
  \caption{Thick full line: $L \overset{RW1}{\to}
(u_m,0) \overset{\delta}{\to} (u_m,v_m) \overset{SW2}{\to} R$. Thick
dashed line: $\bar{L} \overset{RW1}{\to} (\bar{u}_m,0)
\overset{\delta}{\to} (\bar{u}_m,\bar{v}_m) \overset{RW2}{\to} R$.}
  \label{sol-v}
  \end{center}
\end{figure}

%\placedrawing{km21.lp}{ Thick full line: $L \overset{RW1}{\to}
%(u_m,0) \overset{\delta}{\to} (u_m,v_m) \overset{SW2}{\to} R$. Thick
%dashed line: $\bar{L} \overset{RW1}{\to} (\bar{u}_m,0)
%\overset{\delta}{\to} (\bar{u}_m,\bar{v}_m) \overset{RW2}{\to}
%R$.}{sol-v}

\begin{figure}[htp]
\begin{center}
  \includegraphics[width=4in]{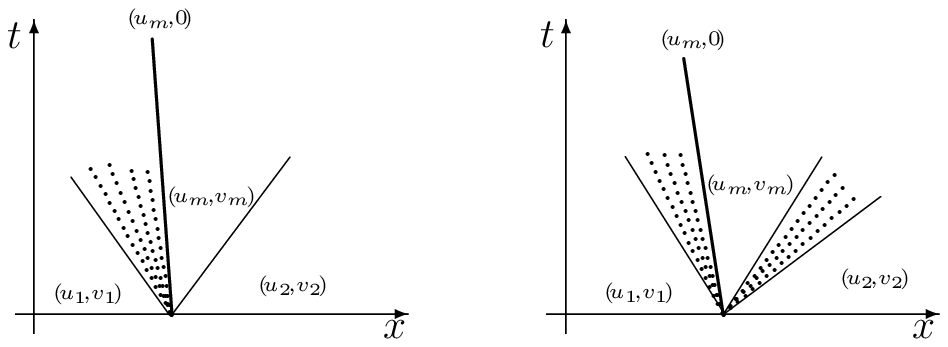}\\
  \caption{The left plot corresponds to the situation
when $(u_m,v_m)$ is connected with $(u_2,v_2)$ by SW2, and the right
plot when $(u_m,v_m)$ is connected with $(u_2,v_2)$ by RW2.}
  \label{phase}
  \end{center}
\end{figure}

%\placedrawing{km31.lp}{The left plot corresponds to the situation
%when $(u_m,v_m)$ is connected with $(u_2,v_2)$ by SW2, and the right
%plot when $(u_m,v_m)$ is connected with $(u_2,v_2)$ by RW2}{phase}

This theorem provides existence of an admissible $\delta$-shock
solution of the system \eqref{brio1} with Riemann data \eqref{rieman-data}.
However, even with the admissibility concept provided by Definition \ref{admis}.
it is not difficult to see that uniqueness may not hold.
For example,
a left state $L=(u_1,v_1)$ and a right state $R=(u_2,v_2)$ may be joined
directly by a $1$-admissible $\delta$ shock as long as
%
%and thus satisfies \eqref{v-f}, appropriate
%
\begin{equation*}
v_1\frac{u_2-u_1}{v_2-v_1} \geq
\frac{1}{2}-\sqrt{\frac{1}{4}+v_2^2}, \ \ {\rm and}  \ \
v_2\frac{u_2-u_1}{v_2-v_1} \leq
\frac{1}{2}-\sqrt{\frac{1}{4}+v_1^2},
\end{equation*}
and this is true whenever $u_1-u_2$ is large enough and
$v_2<0<v_1$. We could, of course, add certain conditions which would
eliminate the non-uniqueness. For instance, we could announce a
$\delta$ shock as admissible only if it connects states $L=(u,0)$
and $R=(u,v)$, $v<0$. However, we do not have any physical
justification for such condition and we shall confine ourselves on
the existence statement.

\section{Appendix}

We shall end the paper by considering the possibility of
the $\delta$ distribution to be adjoint to the unknown $v$ in
\eqref{brio1}.
We start with a lemma which will help us to connect certain
states by admissible $\delta$ shocks residing in the unknown $v$.
\begin{lemma}
\label{l-new-1}
Assume that in \eqref{rieman-data} we have $u_1=u_2$ and $v_1=-v_2>0$.
Then, if
$c=\lambda_i(u_1,v_1)=\lambda_i(u_2,v_2)$, $i=1,2$, the functions
\begin{equation}
\label{even}
\begin{split}
u(x,t)= & U_0(x-ct), \\
v(x,t)= & V_0(x-ct) + \alpha(t) \delta(x-ct),
\end{split}
\end{equation}
where $\alpha(t)$ is given by \eqref{ec-a},
represent the $i$-admissible $\delta$-shock solution to
\eqref{brio1}.
\end{lemma}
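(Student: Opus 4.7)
The plan is to combine Corollary \ref{only} with Definition \ref{admis}. First observe that the hypotheses $u_1 = u_2$ and $v_1 = -v_2$ force $v_1^2 = v_2^2$, so the common value $\lambda_i(u_1,v_1) = \lambda_i(u_2,v_2)$ is well-defined and the choice of $c$ in the statement is unambiguous. In this degenerate configuration, Theorem \ref{th-3}(a) does not apply directly, since it requires $u_1 \neq u_2$. However, Corollary \ref{only} asserts that for exactly this constellation of data the ansatz \eqref{new_1} still yields a weak asymptotic solution of \eqref{brio1}, \eqref{rieman-data} for any choice of propagation speed, and in particular for $c = \lambda_i(u_1,v_1)$.

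Next, I would pass to the distributional limit following the proof of Theorem \ref{th-3}(a). The only items to verify in order that $u$ and $v$ from \eqref{even} satisfy the integral identities \eqref{m1-g1}, \eqref{m2-g1} with $f = (u^2+v^2)/2$ and $g = v(u-1)$ are the two Rankine--Hugoniot type relations. The first, $c[U] = [f(U,V)]$, is trivially satisfied here because $[U] = u_2 - u_1 = 0$ and $[U^2 + V^2] = 0$, so it holds regardless of $c$. The second reduces, after standard manipulation and integration by parts against a test function, to $\alpha'(t) = c[V] - [g(u,V)]$; this is precisely the ODE for $\alpha$ encoded in \eqref{ec-a}, and together with $\alpha(0) = 0$ it pins down $\alpha(t)$ as in the statement.

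It then remains to check $i$-admissibility in the sense of Definition \ref{admis}. The compressivity inequalities \eqref{v-f} read $\lambda_i(u_2,v_2) \leq c \leq \lambda_i(u_1,v_1)$. By the hypothesis both bounds coincide with $c$, so \eqref{v-f} holds with equality. Hence \eqref{even} is an $i$-admissible compressive $\delta$-shock, for each choice $i = 1, 2$.

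The main obstacle is conceptual rather than computational: one must recognize that the degeneracy $u_1 = u_2$, $v_1^2 = v_2^2$ places the problem outside the reach of Theorem \ref{th-3}(a) and that Corollary \ref{only} is the right substitute, precisely because it leaves $c$ as a free parameter. The admissibility criterion then plays the role of selecting this free parameter by forcing the shock speed to coincide with the (coinciding) characteristic speeds of the two states. Once this structural point is in place, every remaining step is a routine verification.
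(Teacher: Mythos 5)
Your proposal is correct and follows essentially the same route as the paper: the authors likewise invoke Corollary \ref{only} to handle the degenerate case $[U]=[U^2+V^2]=0$ (so that the speed $c$ is left free), verify the integral identities as in the proof of Theorem \ref{thm-cnl}~a), and then obtain $i$-admissibility because both inequalities in \eqref{v-f} hold with equality for $c=\lambda_i(u_1,v_1)=\lambda_i(u_2,v_2)$. Your write-up is in fact somewhat more explicit than the paper's own two-line argument, and the only cosmetic caveat is the factor $\frac{1}{2}$ appearing in \eqref{ec-a} versus the ODE $\alpha'(t)=c[V]-[g(u,V)]$, which is an internal bookkeeping issue of the paper rather than a gap in your reasoning.
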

\begin{proof}
It is enough to rely on Corollary \ref{only} and proof of Theorem
\ref{thm-cnl}. Indeed, taking \eqref{even} for any $c\in \R$, and
inserting them into Definition \ref{admis}, we see that such $u$ and $v$
represent the $\delta$-shock solution to \eqref{brio1},
\eqref{rieman-data}. To see this, one may use the same reasoning
as in the proof of Theorem \ref{thm-cnl} and relation \eqref{eq1}.

Next, we take $c=\lambda_1(u_1,v_1)=\lambda_1(u_2,v_2)$
or
$c=\lambda_2(u_1,v_1)=\lambda_2(u_2,v_2)$
to conclude that the pair $(u,v)$ is $1$-admissible or $2$-admissible, respectively,
in the sense of Definition \ref{admis}.
\end{proof}

Using the lemma, we can connect the states
$$L=(u_1,v_1) \ \  {\rm and} \ \  R=(u_2,v_2) \ \  \text{where} \ \ u_2>u_1$$
by an admissible $\delta$-shock solution $(u,v)$ to \eqref{brio1},
\eqref{rieman-data} admitting the $\delta$ shock in the function $v$
through one of the following procedures

\begin{itemize}

\item $L \to (v_m,u_m)$ by RW1; $(v_m,u_m)\to (-v_m,u_m)$ by the
$\delta$ shock with the speed $c=\lambda_1(u_m,v_m)$; $(-v_m,u_m)\to
R$ by RW2. See Figure \ref{old-2}.  In this case, we can also put
$c=\lambda_2(u_m,v_m)$. If we take such $c$ then $\delta$ shock
travels with the state $(u_m,-v_m)$. If we take
$c=\lambda_1(u_m,v_m)$ then $\delta$ shock travels with the state
$(u_m,v_m)$. Remark the non-uniqueness that we have here.

\item $L \to (v_m,u_m)$ by SW1; $(v_m,u_m)\to (-v_m,u_m)$ by the
$\delta$ shock with the speed $c=\lambda_2(u_m,v_m)$; $(-v_m,u_m)\to
R$ by RW2. See Figure \ref{old-3}.

\item $L \to (v_m,u_m)$ by RW1; $(v_m,u_m)\to (-v_m,u_m)$ by the
$\delta$ shock with the speed $c=\lambda_1(u_m,v_m)$; $(-v_m,u_m)\to
R$ by SW2. See Figure \ref{old-4}.

\end{itemize}

\begin{figure}[htp]
\begin{center}
  \includegraphics[width=4in]{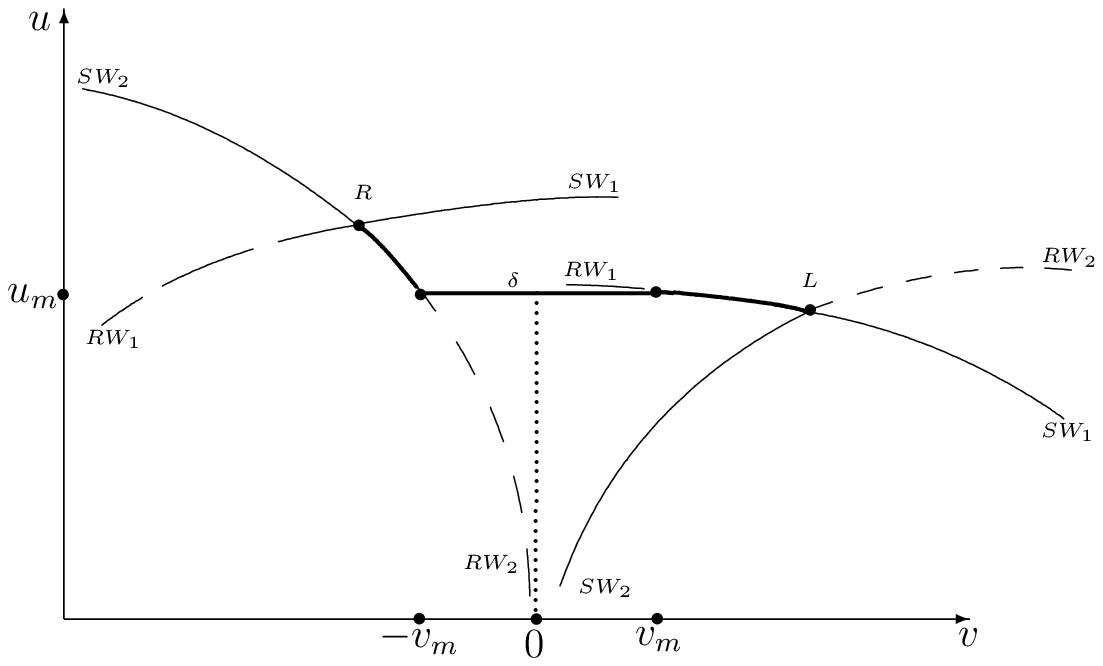}\\
  \caption{$L=(u_1,v_1) \overset{RW1}{\to}
(u_m,v_m)\overset{\delta}{\to} (u_m,-v_m)\overset{RW2}{\to}
R=(u_2,v_2)$.}
  \label{old-2}
  \end{center}
\end{figure}

%\placedrawing{km6.lp}{$L=(u_1,v_1) \overset{RW1}{\to}
%(u_m,v_m)\overset{\delta}{\to} (u_m,-v_m)\overset{RW2}{\to}
%R=(u_2,v_2)$.}{old-2}

\begin{figure}[htp]
\begin{center}
  \includegraphics[width=4in]{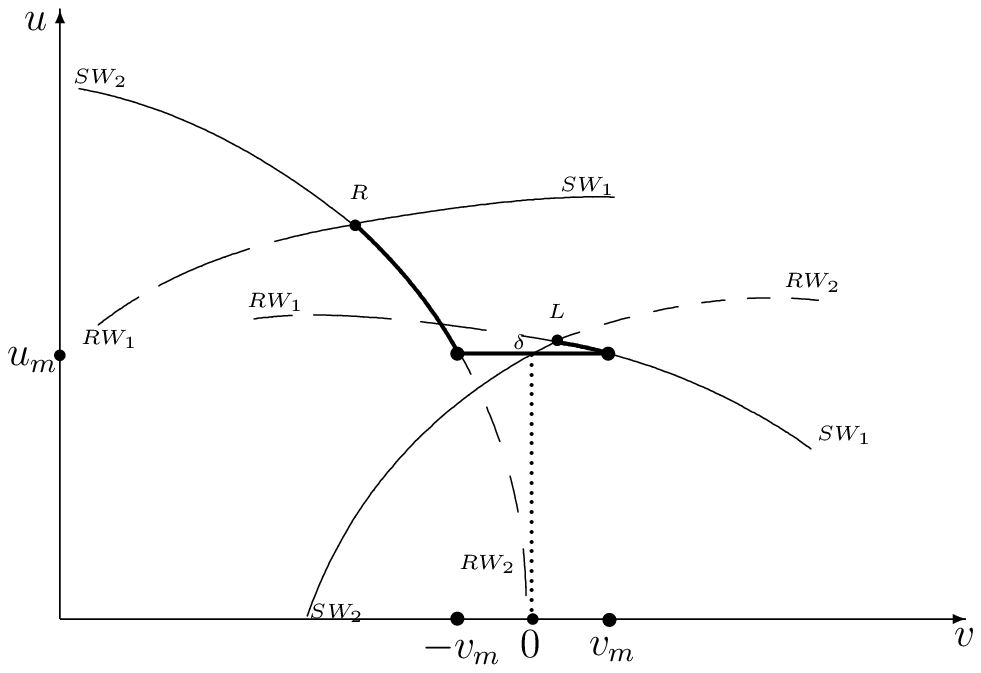}\\
  \caption{$L=(u_1,v_1) \overset{SW1}{\to}
(u_m,v_m)\overset{\delta}{\to} (u_m,-v_m)\overset{RW2}{\to}
R=(u_2,v_2)$.}
  \label{old-3}
  \end{center}
\end{figure}

%\placedrawing{km7.lp}{$L=(u_1,v_1) \overset{SW1}{\to}
%(u_m,v_m)\overset{\delta}{\to} (u_m,-v_m)\overset{RW2}{\to}
%R=(u_2,v_2)$.}{old-3}

\begin{figure}[htp]
\begin{center}
  \includegraphics[width=4in]{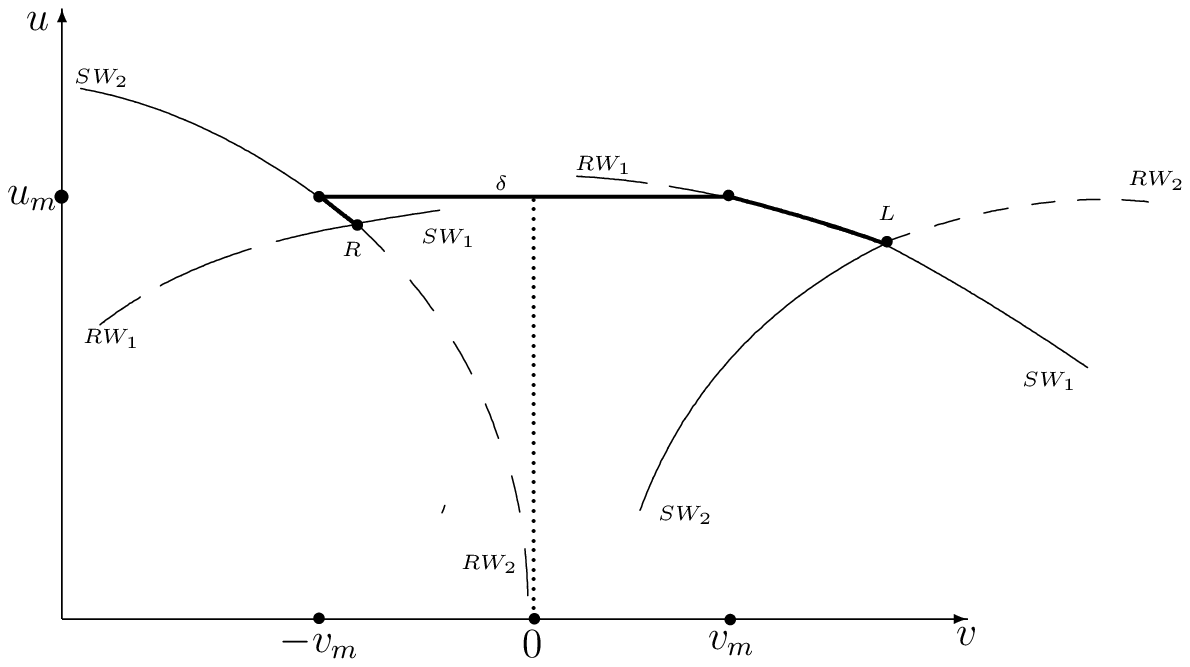}\\
  \caption{$L=(u_1,v_1) \overset{RW1}{\to}
(u_m,v_m)\overset{\delta}{\to} (u_m,-v_m)\overset{SW2}{\to}
R=(u_2,v_2)$.}
  \label{old-4}
  \end{center}
\end{figure}

%\placedrawing{km19.lp}{$L=(u_1,v_1) \overset{RW1}{\to}
%(u_m,v_m)\overset{\delta}{\to} (u_m,-v_m)\overset{SW2}{\to}
%R=(u_2,v_2)$.}{old-4}

In the case when $u_2<u_1$, we do not have a general recipe for
connecting the states $L=(u_1,v_1)$ and $R=(u_2,v_2)$ by an
admissible $\delta$ shock solution with the $\delta$ function
adjoined  to $v$. Finally, observe that each of the $\delta$ shocks
in this section is not really compressive since characteristics from
both sides of the shock are parallel to the shock. Thus we cannot
say that concentration effects are present.

\newpage

{\bf Acknowledgements.} We would like to thank to Professor Marko
Nedeljkov for pointing our attention to the Brio system, as well as
for his valuable comments and remarks. We are grateful to the anonymous
referee for carefully reading the manuscript, and for providing helpful
suggestions for improvement of the article.

Darko Mitrovic is employed as a part-time postdoctoral fellow at the University
of Bergen, funded by the project "Modeling transport in porous
media over multiple scales" of the Research Council of Norway.

\end{document}